\documentclass{article}

\usepackage{graphicx}
\usepackage{pgf}
\usepackage{amsfonts,amsmath,amssymb,mathtools}
\usepackage{url}
\usepackage[utf8]{inputenc}
\usepackage{amsthm}
\usepackage{rotating}
\usepackage{booktabs}
\usepackage[textsize=scriptsize]{todonotes}
\usepackage{hyperref}
\usepackage{subcaption}
\usepackage{authblk}
\usepackage{microtype}

\graphicspath{{./figures/}}



\everymath=\expandafter{\the\everymath\displaystyle}
\IfFileExists{scrextend.sty}{
  \usepackage[fontsize=10.000000pt]{scrextend}
}{
  \renewcommand{\normalsize}{\fontsize{10.000000}{12.000000}\selectfont}
  \normalsize
}

\makeatletter\@ifpackageloaded{underscore}{}{\usepackage[strings]{underscore}}\makeatother

\newtheorem{theorem}{Theorem}[section]
\newtheorem{lemma}[theorem]{Lemma}
\newtheorem{corollary}[theorem]{Corollary}

\theoremstyle{definition}

\newtheorem{remark}[theorem]{Remark}

\newcommand{\RR}{\mathbb{R}}
\let\SS\relax
\newcommand{\SS}{\mathbb{S}}
\newcommand{\EE}{\mathbb{E}}
\newcommand{\PP}{\mathbb{P}}
\newcommand{\dd}{\,\mathrm{d}}

\newcommand{\norm}[1]{\left\|{#1}\right\|}

\newcommand{\scp}[2]{\langle {#1},{#2}\rangle}
\newcommand{\mgeq}{\succcurlyeq}

\DeclarePairedDelimiter\p{\lparen}{\rparen}

\DeclareMathOperator*{\argmin}{argmin}
\DeclareMathOperator*{\minimize}{minimize}
\DeclareMathOperator{\diag}{diag}
\DeclareMathOperator{\Cov}{Cov}
\DeclareMathOperator{\tr}{tr}

\DeclareMathOperator{\Unif}{Unif}

\setlength{\affilsep}{2em}

\title{Why the noise model matters:\\ A performance gap in learned regularization}

\author[1]{Sebastian Banert}
\author[2]{Christoph Brauer}
\author[1]{Dirk Lorenz}
\author[1]{Lionel Tondji}

\affil[1]{Center for Industrial Mathematics, University of Bremen,\protect\\ Postfach 330440, 28334 Bremen, Germany}
\affil[2]{Institute of Lightweight Systems, German Aerospace Center,\protect\\ Ottenbecker Damm 12, 21684 Stade, Germany}

\begin{document}
\maketitle

\begin{abstract}
  This article addresses the challenge of learning effective regularizers for linear inverse problems. We analyze and compare several types of learned variational regularization against the theoretical benchmark of the optimal affine reconstruction, i.e. the best possible affine linear map for minimizing the mean squared error. It is known that this optimal reconstruction can be achieved using Tikhonov regularization, but this requires precise knowledge of the noise covariance to properly weight the data fidelity term.
  However, in many practical applications, noise statistics are unknown. We therefore investigate the performance of regularization methods learned without access to this noise information, focusing on Tikhonov, Lavrentiev, and quadratic regularization. Our theoretical analysis and numerical experiments demonstrate that for non-white noise, a performance gap emerges between these methods and the optimal affine reconstruction. Furthermore, we show that these different types of regularization yield distinct results, highlighting that the choice of regularizer structure is critical when the noise model is not explicitly learned. Our findings underscore the significant value of accurately modeling or co-learning noise statistics in data-driven regularization.
\end{abstract}

\textbf{Keywords:} Tikhonov regularization, supervised learning, Lavrentiev regularization, variational regularization

\textbf{MSC classification:} 65J20, 68T05

\section{Introduction}
\label{sec:intro}

In this paper we are interested in the problem of learning to regularize a linear inverse problem in a supervised fashion. The goal of regularization is to provide a map that gives good approximations to the true solutions when applied to noisy data. We are specifically interested in learning regularizers for variational regularization such as Tikhonov regularization. 

We assume that we are given a forward operator $A\in\RR^{m\times n}$ and that we also have access to samples of clean data $x^{\dag}\in\RR^{n}$. With this we can then form clean measurements $Ax$ and noisy measurements $y$ by adding noise to the clean measurements. This will generate pairs \((x^{\dag}, y)\) that can be used for training.
We may also be in the situation where we are already given pairs of (sufficiently clean) data $x^{\dag}$ and noisy measurements $y$ without having access to the underlying noise. One example is noise removal and dereverberation in audio processing.
To collect paired data for training and testing dereverberation models, researchers can use two microphones in the same room: a high-quality microphone close to the source to record clean audio $x^{\dag}$ and a lower-quality microphone further from the source that captures the reverberation as well as noise. Several sources of noise, e.g. from wind or background clutter, have unknown noise characteristics.

Learning regularization of inverse problems has been in the focus of research for some years now~\cite{argyrou2012tomographic,adler2017solving,arridge2019solving} and one particular focus is on learning variational regularization~\cite{hauptmann2018model,hammernik2018learning,lunz2018adversarial,mukherjee2020learned,mukherjee2021end}. Variational regularization sets up an objective function consisting of a discrepancy term $\mathcal{D}$ and a regularizer $\mathcal{R}$ and then one calculates the regularized solution by solving
\begin{equation*}
\minimize_{x}\quad \mathcal{D}(Ax,y) + \mathcal{R}(x)
\end{equation*}
(usually the regularized $\mathcal{R}$ is weighted by a positive regularization parameter $\alpha$ which we omit here). To learn the regularizer $\mathcal{R}$ one follows the paradigm of risk minimization (see, e.g.,~\cite{vapnik1991principles}) and considers the following bilevel optimization problem 
\begin{equation}\label{eq:bilevel-learning}
  \begin{alignedat}{3}
    \minimize_{\mathcal{R}} &\quad && 
      \EE_{x^{\dag},y}\,\ell(\hat{x}(y),x^{\dag}) \\
    \text{s.t.} &\quad && 
      \hat{x}(y) \in \argmin_{x} {\cal D}(Ax,y) + {\cal R}(x),
  \end{alignedat}
\end{equation}
where the expectation in the upper level problem is over paired data $(x,y)$ and $\ell$ is a loss function that quantifies the quality of the reconstruction. In practice, the expectation in the upper level problem is replaced by the empirical expectation, i.e., by the mean over all available pairs $(x,y)$. 
As a matter of fact, the approach~\eqref{eq:bilevel-learning} is often not applied in its plain form, since the bilevel problem is usually too hard to solve. Hence, approximate, methods are used, i.e. the method of unrolling/unfolding an algorithm that solves the lower level problems (see also the next section). 

In this work we investigate bilevel variational learning theoretically to understand its limitations. 
We are not aware of many works that provide a theoretical analysis of bilevel learning of variational methods for inverse problems.
The work~\cite{alberti2021learning} investigates standard Tikhonov regularization and assumes that the noise distribution is known.
The work~\cite{brauer2024learning} investigates both bilevel learning and unrolling for denoising by Tikhonov regularization without assuming that the noise distribution is known.
Here we also focus on regularization by Tikhonov regularization and related methods, namely Lavrentiev regularization of the normal equations and regularization with a general, not necessarily convex, quadratic regularization functional and we do not assume that the distribution of the noise is known. 
We will also provide some computational experiments to see if our theoretical findings can be observed in practice.

\subsection{State of the art}
\label{sec:sota}

The success of deep learning has spurred the development of data-driven methods for solving inverse problems, which largely fall into two distinct paradigms: (i) end-to-end networks that directly map measurements to reconstructions, often by unrolling iterative algorithms, and (ii) learned regularization methods that replace handcrafted priors within a classical variational framework. End-to-end approaches generally offer very fast reconstruction times but typically require supervised training with large sets of paired measurement and ground-truth data. In contrast, learned regularization methods often retain the interpretability and theoretical guarantees of the variational setting and can sometimes be trained on unpaired data, but their application requires solving a potentially slow and non-convex optimization problem at test time.

Within the learned regularization paradigm, several approaches have focused on parameterizing the regularizer $\mathcal{R}$ with a deep neural network. A foundational theoretical framework for this is the Network Tikhonov (NETT) approach, for which Li et al. \cite{li2020nett} established a complete convergence analysis. The primary advantage of NETT is its theoretical underpinning, providing well-posedness and convergence rate results for non-convex learned regularizers. A notable limitation, however, is its focus on analysis at the expense of a less sophisticated training scheme compared to more recent methods. A training methodology was introduced by Lunz et al. \cite{lunz2018adversarial} with Adversarial Regularizers (AR). The key advantage of AR is its flexible, unsupervised training protocol, which learns a critic network to distinguish between unregularized reconstructions and ground-truth images. The principal drawback is that the resulting non-convex regularizer leads to an iterative reconstruction process with no guarantees of convergence to a global minimum. Addressing this, Mukherjee et al. \cite{mukherjee2021end} proposed Adversarial Convex Regularizers (ACR), which constrain the regularizer network to be input-convex. The advantage of ACR is that it yields a convex variational problem, guaranteeing a unique optimal solution. The inherent trade-off, however, lies in the reduced expressive power of convex functions, which may limit reconstruction quality.

The alternative paradigm of algorithm unrolling has also proven highly effective. Hammernik et al. \cite{hammernik2018learning} introduced the Variational Network (VN), and Hauptmann et al. \cite{hauptmann2018model} proposed Deep Gradient Descent (DGD), both of which unroll an iterative scheme and learn its components end-to-end. The main advantage of these methods is their combination of model-based structure with the speed of a single forward pass at test time. A significant challenge for these methods is their reliance on strictly supervised training, which requires large corpora of paired data often unavailable in practice. Seeking to bridge these paradigms, Mukherjee et al. \cite{mukherjee2021end} developed Unrolled Adversarial Regularization (UAR), a hybrid method that adversarially co-trains a fast, unrolled network alongside a regularizer using only unpaired data. The advantage of UAR is its ability to combine the speed of end-to-end methods with the flexibility of unsupervised training. This, however, comes at the cost of increased complexity in the training pipeline, which requires carefully balancing a data-fidelity loss with an adversarial, distribution-matching loss.

A significant body of work has focused on learning regularizers with explicit convexity constraints, which guarantee a unique solution to the variational problem and allow for the use of provably convergent optimization algorithms. Mukherjee et al. \cite{mukherjee2021learning} push this concept further by proposing a method to learn a convex regularizer that also satisfies the variational source condition. The principal advantage of this approach is its theoretical foundation, as it enables the derivation of explicit convergence rates for the reconstruction. Its main limitation, however, is that the additional constraint imposed during training can lead to a slight deterioration in empirical performance compared to its less constrained counterpart. Taking a different route to provable and reliable models, Goujon et al. \cite{goujon2023neural} introduce a shallow, neural-network-based regularizer built from convex-ridge functions. The strength of this method lies in its simplicity, universality, and fast training protocol, where the regularizer is learned as a multi-step denoiser. A potential shortcoming is that the shallow architecture, while interpretable, may not possess the same expressive capacity as deeper, more complex models. Bridging the gap between convex and fully non-convex priors, Zhang and Leong \cite{zhang2025learning} propose learning a regularizer with a Difference-of-Convex (DC) structure. The key advantage here is the balance struck between flexibility and theoretical tractability; the DC formulation is more expressive than purely convex models but still allows for the use of specialized, convergent optimization algorithms like DCA. The primary trade-off is the increased complexity of the reconstruction, which requires these non-standard solvers.
Other approaches have explored alternative structural priors or have focused on different aspects of the learning problem. Alberti et al. \cite{alberti2025learning} take a unique statistical approach by modeling the signal prior as a Gaussian Mixture Model (GMM). They derive the exact Bayes estimator, which can be interpreted as a specific two-layer neural network with an attention-like mechanism. This method's main advantage is its interpretability and strong probabilistic grounding. Its scalability, however, presents a significant challenge, as the number of parameters grows prohibitively with the signal dimension and the number of mixture components, making it best suited for problems with known structured sparsity. At the other end of the architectural spectrum, Kobler et al. \cite{kobler2021total} introduce the Total Deep Variation (TDV) regularizer, a deep, multi-scale convolutional network whose training is framed as a mean-field optimal control problem. The strength of TDV lies in its empirical performance and its stability analysis with respect to both inputs and model parameters. The associated drawback is that the resulting variational problem is non-convex, meaning convergence to a global minimizer is not guaranteed, and the training itself is conceptually complex.

The remainder of this article is organized as follows: in Section
\ref{sec:setup}, we discuss the precise assumptions on the inverse
problems and the regularization methods that we consider. In Section
\ref{sec:learning-affine}, we revisit the results of
\cite{alberti2021learning} that establish the best affine
reconstruction method. In section \ref{sec:learning-noise-and-reg}, we
show that all reconstruction methods recover this best affine
regularizer when the noise weight is learned. Section
\ref{sec:learning-regularizers} is devoted to finding optimal
Lavrentiev and quadratic regularizers more explicitly, and the
implications of the theoretical results is discussed in Section
\ref{sec:discussion}. The numerical experiments in Section
\ref{sec:numerical-experiments} confirm that the discrepancies between
different regularizers can be observed in realistic scenario, provided
that the noise covariance is not too simple. Concluding remarks will
be given in \ref{sec:conclusion}.

\subsection{Notation}
\label{sec:notation}
Throughout the paper, all random variables will be defined on a fixed probability space \((\Omega, \mathcal{F}, \PP)\). The \emph{expectation} of an integrable, \(\RR^n\)-valued random variable \(x\colon \Omega \to \RR^n\) will be denoted by \(\EE\p{x} = {\textstyle \int} x\p{\omega} \,\dd\PP\p{\omega} \in \RR^n\). We shall usually abbreviate this quantity by writing \(\mu_x\). The \emph{covariance} of a square-integrable, \(\RR^n\)-valued random variable \(x\colon \Omega \to \RR^n\) is \(\Cov\p{x} = \EE\p{\p{x - \mu_x}\p{x - \mu_x}^\top} \in \RR^{n \times n}\) and will usually be denoted by \(\Sigma_x\).

We denote the identity matrix of size $n \times n$ by $I_{n}$ and drop
the subscript if no confusion arises. We write $D\mgeq 0$ to mean that $D$ is symmetric and positive semidefinite and the set of all such matrices is denoted by $\SS_{\mgeq 0}^{n}$ while we use $\SS^{n}$ for the set of symmetric matrices of size $n\times n$.
For  $D\in\SS^{n}_{\mgeq 0}$ we denote the induced inner product and norm on $\RR^{n}$ by 
\begin{align*}
\scp{x}{y}_{D} = \scp{x}{Dy},\quad \norm{x}_{D} = \sqrt{\scp{x}{x}_{D}},
\end{align*}
respectively. With slight abuse of terminology we will refer to $D$ as the \emph{metric}.

We denote the Frobenius inner product of two matrices $A,B$ of the same size and the induced norm by
\begin{align*}
  \scp{A}{B} = \tr(A^TB) = \sum_{ij}A_{ij}B_{ij},\quad
\norm{A}_{\textnormal{Fro}} = \Bigl( \sum\limits_{ij}^{}A_{ij}^2 \Bigr)^{1/2}.
\end{align*}
We will write Frobenius inner products without subscripts, but
distinguish the induced norm from other matrix norms.
An $n$-dimensional normal distribution with mean $\mu\in\RR^{n}$ and
covariance $\Sigma\in \SS^{n}_{\mgeq 0}$ is denoted by $\mathcal{N}(\mu,\Sigma)$.
With $\Unif([a,b])$ we denote the uniform distribution on the interval $[a,b]$.

\section{Problem setup}
\label{sec:setup}

Let us fix the problem setup: We assume that the true data $x^{\dag}\in\RR^{n}$ comes from a distribution with finite second moment, i.e., $x^{\dag}$ is a random vector in $\RR^{n}$ and we assume that it has mean $\EE_{x^{\dag}}(x^{\dag}) = \mu_{x^{\dag}}$ and covariance $\Cov(x^{\dag}) = \Sigma_{x^{\dag}}\in\RR^{n\times n}$. The measured data $y\in\RR^{m}$ is contaminated by random noise. We assume that the noise $\varepsilon = y - Ax$ is uncorrelated with the solution $x^{\dag}$. Moreover, we assume that the random vector $\varepsilon$ has zero mean, i.e., $\EE_{\varepsilon}(\varepsilon) = 0$, and that the covariance of the noise exists and we denote it by $\Cov_{\varepsilon}(\varepsilon)  = \Sigma_{\varepsilon}\in\RR^{m\times m}$.

As loss function in the upper level problem in \eqref{eq:bilevel-learning} we always assume the least squares function, i.e., $\ell(\hat{x},x) = \tfrac12\norm{\hat{x}-x}^{2}$.

For the lower level problem, i.e., for the regularization method, we will consider six different formulations which are, in order of increasing generality:
\begin{description}
\item[Tikhonov regularization:] The starting point for our investigation is the problem of learning quadratic Tikhonov regularization, i.e., the lower level problem is 
  \begin{align}
    \label{eq:lower-level-tikhonov-noiseweight}
    \hat{x}(y) = \argmin_{x}\tfrac12\norm{Ax-y}_{\Omega}^{2} + \tfrac12\norm{R(x-x_{0})}^{2}
  \end{align}
  where $\Omega\in\RR^{m\times m}$ is a \emph{noise weight}, i.e., a positive definite matrix, $R\in\RR^{k\times n}$ is the \emph{regularization} and $x_{0}$ models some \emph{offset}.

  In principle, $\Omega$, $R$ and $x_{0}$ can be learned, but often only the regularization $R$ and the offset $x_{0}$ are learned, see, e.g.~\cite{alberti2021learning} where the noise weight is set $\Omega=\Sigma_{\varepsilon}$, i.e., the noise is whitened.

  As many works focus on learning regularizers, we will also consider 
  \begin{align}
    \label{eq:lower-level-tikhonov}
    \hat{x}(y) = \argmin_{x}\tfrac12\norm{Ax-y}^{2} + \tfrac12\norm{R(x-x_{0})}^{2}
  \end{align}
  where the standard $\ell^{2}$-norm is used for the discrepancy term.
\item[Quadratic regularization:] Slightly more generally, we can consider to learn a quadratic regularizer that is not necessarily convex, i.e., we consider the lower level problem 
  \begin{align}
    \label{eq:lower-level-quadratic-noiseweight}
    \hat{x}(y) = \argmin_{x}\tfrac12\norm{Ax-y}_{\Omega}^{2} + \tfrac12\scp{x-x_{0}}{M(x-x_{0})}
  \end{align}
  with a square matrix $M\in\RR^{n\times n}$, which we assume without loss of generality to be symmetric. This approach is slightly more general than Tikhonov regularization as $M$ is not assumed to be positive (semi-)definite. This allows to ``regularize negatively'' in the directions with negative eigenvalues of $M$. In the context of regression it has been observed that under certain circumstances the optimal regularization parameter can be negative~\cite{kobak2020optimal} (see also~\cite{tsigler2023benign}).

  Similar to the Tikhonov case we also consider the case 
  \begin{align}
    \label{eq:lower-level-quadratic}
    \hat{x}(y) = \argmin_{x}\tfrac12\norm{Ax-y}^{2} + \tfrac12\scp{x-x_{0}}{M(x-x_{0})}
  \end{align}
  where the discrepancy uses the standard $\ell^{2}$-norm.
\item[Lavrentiev regularization:] The optimality condition for~\eqref{eq:lower-level-quadratic-noiseweight} reads as 
  \begin{align*}
    0 = A^{T}\Omega(Ax-y) + M(x-x_{0})
  \end{align*}
  which leads to the map 
  \begin{align}\label{eq:lower-level-lavrentiev-noiseweight}
    \hat{x}(y) = (A^{T}\Omega A + M)^{-1}(A^{T}\Omega y + Mx_{0}).
  \end{align}
  This is again slightly more general than the previous case since we
  do not assume that $M$ is symmetric, and we permit learning any matrix $M\in\RR^{n\times n}$ without any further assumptions. Hence, this method is in general not a variational method since it is not clear if $\hat{x}(y)$ can also be obtained as the solution of any meaningful minimization problem.

  Again, also the problem without noise weight
  \begin{align}\label{eq:lower-level-lavrentiev}
    \hat{x}(y) = (A^{T} A + M)^{-1}(A^{T} y + Mx_{0})
  \end{align}
  will be considered. This approach amounts to \emph{Lavrentiev regularization} of the normal equation $A^{T}Ax = A^{T}y$~\cite{tautenhahn2002method,hamarik2008extrapolation,semenova2010lavrentiev,lorenz2013necessary}.
\end{description}

To summarize, we collect all methods in Table~\ref{tab:regularizaton-methods}. Notably, all methods lead to an affine map $y\mapsto \hat{x}(y)$ and hence, as a baseline, we consider general affine linear maps as regularization as well.

\begin{sidewaystable}
  \centering
  \caption{Table of regularization methods considered in the paper}
  \label{tab:regularizaton-methods}
  \begin{tabular}{llp{7cm}p{2cm}}\toprule
    \textbf{Method} & \textbf{Minimization problem} & \textbf{Map} & \textbf{Parameters}\\\midrule
    Tikhonov w/ noise weight & $\tfrac12\norm{Ax-y}_{\Omega}^{2} + \tfrac12\norm{R(x-x_{0})}^{2}$ & $\hat{x}(y) = (A^{T}\Omega A + R^{T}R)^{-1}(A^{T}\Omega y + R^{T}Rx_{0})$\newline $W = (A^{T}\Omega A + R^{T}R)^{-1}A^{T}\Omega$\newline $b = (A^{T}\Omega A + R^{T}R)^{-1}R^TRx_{0}$ & $\Omega\in\SS_{\mgeq 0}^{m}$\newline $R\in\RR^{n\times n}$\newline $x_{0}\in\RR^{n}$\\\midrule
    Tikhonov w/o noise weight & $\tfrac12\norm{Ax-y}^{2} + \tfrac12\norm{R(x-x_{0})}^{2}$ & $\hat{x}(y) = (A^{T} A + R^{T}R)^{-1}(A^{T} y + R^{T}Rx_{0})$\newline $W = (A^{T} A + R^{T}R)^{-1}A^{T}$\newline $b = (A^TA + R^TR)^{-1}R^TRx_{0}$ & $R\in\RR^{n\times n}$\newline $x_{0}\in\RR^{n}$\\\midrule
    Quadratic w/ noise weight & $\tfrac12\norm{Ax-y}_{\Omega}^{2} + \tfrac12\scp{x-x_{0}}{M(x-x_{0})}$ & $\hat{x}(y) = (A^{T}\Omega A + M)^{-1}(A^{T}\Omega y + Mx_{0})$\newline $W = (A^{T}\Omega A + M)^{-1}A^{T}\Omega $\newline $b = (A^T\Omega A + M)^{-1}Mx_{0}$ & $\Omega\in\SS_{\mgeq 0}^{m}$\newline $M\in\SS^{n}$\newline $x_{0}\in\RR^{n}$\\\midrule
    Quadratic w/o noise weight & $\tfrac12\norm{Ax-y}^{2} + \tfrac12\scp{x-x_{0}}{M(x-x_{0})}$ & $\hat{x}(y) = (A^{T} A + M)^{-1}(A^{T} y + Mx_{0})$\newline $W = (A^{T} A + M)^{-1}A^{T}$\newline $b = (A^TA + M)^{-1}Mx_{0}$ & $M\in\SS^{n}$\newline $x_{0}\in\RR^{n}$\\\midrule
    Lavrentiev w/ noise weight & - & $\hat{x}(y) = (A^{T}\Omega A + M)^{-1}(A^{T}\Omega y + Mx_{0})$\newline $W = (A^{T}\Omega A + M)^{-1}A^{T}\Omega$\newline $b = (A^T\Omega A + M)^{-1}Mx_{0}$ & $\Omega\in\SS_{\mgeq 0}^{m}$\newline $M\in\RR^{n\times n}$\newline $x_{0}\in\RR^{n}$\\\midrule
    Lavrentiev w/o noise weight & - & $\hat{x}(y) = (A^{T} A + M)^{-1}(A^{T} y + Mx_{0})$\newline  $W = (A^{T} A + M)^{-1}A^{T}$\newline $b = (A^TA + M)^{-1}Mx_{0}$ & $M\in\RR^{n\times n}$\newline $x_{0}\in\RR^{n}$\\\bottomrule
  \end{tabular}
\end{sidewaystable}

\begin{description}
\item[Affine regularization:] As a generalization of all six methods, we will consider
  \begin{align}
    \label{eq:lower-level-affine-linear}
    \hat{x}(y) = Wy + b
  \end{align}
  with $W\in\RR^{n\times m}$ and $b\in\RR^{n}$.
\end{description}

Since we choose  $\ell(\hat{x},x) = \norm{\hat{x}-x}^{2}$ as upper level loss, we get as problem~\eqref{eq:bilevel-learning} to learn how to regularize the inverse problem $Ax=y$
\begin{equation}\label{eq:learning-affine-maps}
  \begin{alignedat}{3}
    \min_{W,b} &\quad && \EE_{x^{\dag},\varepsilon}\,\norm{\hat{x}(y)-x^{\dag}}^{2} \\
    \text{s.t.} &\quad && \hat{x}(y) = Wy + b,\quad y = Ax^{\dag} + \varepsilon
  \end{alignedat}
\end{equation}
where the matrix $W$ for the different methods and the respective $b$ can be found in Table~\ref{tab:regularizaton-methods}.

In general we denote the \emph{risk} of a method $\hat{x}_{\theta}$ that depends on parameters $\theta$ by 
\begin{align*}
\mathcal{R}(\theta) := \EE_{x^{\dag},\varepsilon}\norm{\hat{x}_{\theta}(y)-x^{\dag}}^{2}
\end{align*}
For the cases considered in this paper the parameters $\theta$ can be found in the last column of Table~\ref{tab:regularizaton-methods}. The optimal risk for specific method is denoted by
\begin{align*}
  \mathcal{R}_{\textnormal{Method}} = \inf_{\theta}\ \EE_{x^{\dag},\varepsilon}\norm{\hat{x}_{\theta}^{\textnormal{Method}}(y)-x^{\dag}}^{2} .
\end{align*}
In this work we consider the following optimal risks and respective parameterized regularization methods:
\bigskip

\renewcommand{\arraystretch}{1.7}
\begin{tabular}{r @{\hspace{0.2em}} l @{\hspace{2em}} r @{\hspace{0.3em}} l}
$\mathcal{R}_{\textnormal{Aff}}$ &: & $\hat{x}_{\theta}^{\textnormal{Aff}}(y)$ & $= Wy  + b$\\
$\mathcal{R}_{\textnormal{Tikh}(\Omega)}$ &: & $\hat{x}_{\theta}^{\textnormal{Tikh}(\Omega)}(y)$ & $= \argmin_{x}\tfrac12\norm{Ax-y}_{\Omega}^{2} + \tfrac12\norm{R(x-x_{0})}^{2}$\\
$\mathcal{R}_{\textnormal{Tikh}}$ &: & $\hat{x}_{\theta}^{\textnormal{Tikh}}(y)$ & $= \argmin_{x}\tfrac12\norm{Ax-y}^{2}+\tfrac12\norm{R(x-x_{0})}^{2}$\\
$\mathcal{R}_{\textnormal{Quad}(\Omega)}$ &: & $\hat{x}_{\theta}^{\textnormal{Quad}(\Omega)}(y)$ & $= \argmin_{x}\tfrac12\norm{Ax-y}_{\Omega}^{2}+\tfrac12\scp{x-x_{0}}{M(x-x_{0})}$\\
$\mathcal{R}_{\textnormal{Quad}}$ &: & $\hat{x}_{\theta}^{\textnormal{Quad}}(y)$ & $= \argmin_{x}\tfrac12\norm{Ax-y}^{2}+\tfrac12\scp{x-x_{0}}{M(x-x_{0})}$\\
$\mathcal{R}_{\textnormal{Lav}(\Omega)}$ &: & $\hat{x}_{\theta}^{\textnormal{Lav}(\Omega)}(y)$ & $= (A^{T}\Omega A + M)^{-1}(A^{T}\Omega y + Mx_{0})$\\
$\mathcal{R}_{\textnormal{Lav}}$ &: & $\hat{x}_{\theta}^{\textnormal{Lav}}(y)$ & $= (A^{T}A + M)^{-1}(A^{T} y + Mx_{0})$   
\end{tabular}
\bigskip

By construction we immediately conclude the following order of these optimal risks
\begin{align*}
  \mathcal{R}_{\textnormal{Aff}}\leq \left\{
  \begin{array}{l}
    \mathcal{R}_{\textnormal{Lav}}\leq \mathcal{R}_{\textnormal{Quad}} \leq \mathcal{R}_{\textnormal{Tikh}}\\
    \mathcal{R}_{\textnormal{Lav}(\Omega)} \leq \mathcal{R}_{\textnormal{Quad}(\Omega)} \leq \mathcal{R}_{\textnormal{Tikh}(\Omega)}.
  \end{array}
  \right. 
\end{align*}
The smaller the risk of a method, the better its performance. In the following we aim to analyze if there are performance gaps between the methods we outlined above and if they can be ordered at all.

\section{Learning the best affine reconstruction method}
\label{sec:learning-affine}

We establish the baseline and collect results on the best affine linear reconstruction map $y\mapsto Wy + b$. Most results in this section can also be found elsewhere in the literature, but we include them and their derivation for the sake of completeness.

We begin our analysis with the following result on the expression of the risk for such maps:

Given a matrix \(A \in \RR^{m\times n}\), an \(\RR^n\)-valued random variable \(x^\dagger\) and an \(\RR^m\)-valued random variable \(\varepsilon\), we define the \emph{risk} of an affine mapping \(y \mapsto Wy + b\), where \(W \in \RR^{n\times m}\) and \(b \in \RR^n\), as
\[
  \mathcal{R}(W,b) = \EE_{x^{\dag},\varepsilon}\norm{W(Ax^{\dag}+\varepsilon) + b - x^{\dag}}^{2} \in \RR \cup \{\infty\}.
\]
\begin{lemma}\label{lem:loss-decomposition}
  Let \(x^\dagger\) and \(\varepsilon\) be uncorrelated, square-integrable random variables with \(\EE\p{x^{\dag}} = \mu_{x^{\dag}}\), \(\Cov\p{x^{\dag}} = \Sigma_{^{\dag}}\in\SS_{\geq 0}^{n}\), \(\EE\p{\varepsilon} = 0\), and \(\Cov\p{\varepsilon} = \Sigma_\varepsilon\in\SS_{\geq 0}^{m}\). Then, for any \(W \in \RR^{n\times m}\) and \(b \in \RR^n\), the risk \(\mathcal{R}(W, b)\) is finite and given by
  \begin{multline}\label{eq:pop-risk}
    \begin{split}
      &\mathcal{R}(W, b) \\
      &= \scp{(WA-I)\Sigma_{x^{\dag}}}{WA-I} + \scp{W\Sigma_{\varepsilon}}{W}  + \norm{(WA-I)\mu_{x^{\dag}} + b}^{2}\\
      &= \scp{WA\Sigma_{x^{\dag}}A^{T}}{W} -
        2\scp{\Sigma_{x^{\dag}}A^{T}}{W} + \tr(\Sigma_{x^{\dag}}) \\
      &\hspace{3cm} + \scp{W\Sigma_{\varepsilon}}{W} + \norm{(WA-I)\mu_{x^{\dag}} + b}^{2}.
    \end{split}
  \end{multline}
\end{lemma}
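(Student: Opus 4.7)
The plan is to reduce the computation of \(\mathcal{R}(W,b)\) to a standard mean–covariance decomposition by centering both random vectors. Set
\[
  r := W(Ax^{\dag}+\varepsilon) + b - x^{\dag}
  = (WA - I)(x^{\dag} - \mu_{x^{\dag}}) + W\varepsilon + c,
\]
where \(c := (WA-I)\mu_{x^{\dag}} + b\) is deterministic. The first two summands have mean zero, \(x^{\dag} - \mu_{x^{\dag}}\) and \(\varepsilon\) are uncorrelated by hypothesis, and \(c\) is constant, so when I expand \(\|r\|^2\) and take the expectation, every cross term vanishes. This gives
\[
  \mathcal{R}(W,b)
  = \EE\bigl\|(WA-I)(x^{\dag}-\mu_{x^{\dag}})\bigr\|^2
  + \EE\bigl\|W\varepsilon\bigr\|^2
  + \|c\|^2,
\]
which is exactly the first form up to rewriting the two expectations as Frobenius pairings.

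Next I would use the identity \(\EE\|Mu\|^2 = \tr(M\,\Cov(u)\,M^\top)\) for any zero-mean \(u\) and compatible \(M\). Via the definition of the Frobenius inner product and cyclicity of the trace together with symmetry of the covariance, \(\tr(M\Sigma M^\top) = \tr(\Sigma^\top M^\top M) = \scp{M\Sigma}{M}\). Applying this with \(M = WA-I\), \(\Sigma = \Sigma_{x^{\dag}}\) and then with \(M = W\), \(\Sigma = \Sigma_\varepsilon\) converts the two expectations into \(\scp{(WA-I)\Sigma_{x^{\dag}}}{WA-I}\) and \(\scp{W\Sigma_\varepsilon}{W}\), yielding the first displayed equality.

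For the second equality I would expand \(\scp{(WA-I)\Sigma_{x^{\dag}}}{WA-I}\) by bilinearity of the Frobenius inner product as
\[
  \scp{WA\Sigma_{x^{\dag}}}{WA} - \scp{WA\Sigma_{x^{\dag}}}{I} - \scp{\Sigma_{x^{\dag}}}{WA} + \scp{\Sigma_{x^{\dag}}}{I},
\]
then use that \(\scp{X}{I} = \tr(X)\), that \(\Sigma_{x^{\dag}}\) is symmetric, and cyclicity of the trace to identify \(\scp{WA\Sigma_{x^{\dag}}}{WA} = \scp{WA\Sigma_{x^{\dag}}A^\top}{W}\) and \(\scp{WA\Sigma_{x^{\dag}}}{I} = \scp{\Sigma_{x^{\dag}}}{WA} = \scp{\Sigma_{x^{\dag}}A^\top}{W}\), while \(\scp{\Sigma_{x^{\dag}}}{I} = \tr(\Sigma_{x^{\dag}})\). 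Collecting the terms gives the second displayed form and finiteness follows from square-integrability of \(x^{\dag}\) and \(\varepsilon\).

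There is no real obstacle here; the argument is essentially the bias–variance decomposition for affine estimators. The only point requiring mild care is the bookkeeping between the trace/Frobenius representations, in particular making sure the cyclicity rearrangements and the use of \(\Sigma_{x^{\dag}}^\top = \Sigma_{x^{\dag}}\) are applied consistently so that both stated forms of \(\mathcal{R}(W,b)\) match exactly.
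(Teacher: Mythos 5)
Your proposal is correct and follows essentially the same route as the paper's proof: center the residual as \((WA-I)(x^{\dag}-\mu_{x^{\dag}}) + W\varepsilon + c\), use uncorrelatedness and zero mean to kill the cross terms, and apply the identity \(\EE\norm{L(z-\EE z)}^{2} = \scp{L\Cov(z)}{L}\) to obtain the Frobenius form, then expand by bilinearity for the second equality. No gaps; your bookkeeping with trace cyclicity and symmetry of \(\Sigma_{x^{\dag}}\) matches the paper's (slightly more tersely stated) computation.
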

\begin{proof}
  We add and subtract $(WA-I)\mu_{x^{\dagger}}$ in the risk
  \begin{multline*}
    \EE_{x^{\dag},\varepsilon}\norm{W(Ax^{\dag}+\varepsilon) + b -x^{\dag}}^{2}\\
    = \EE_{x^{\dag},\varepsilon}\norm{(WA-I)(x^{\dag}-\mu_{x^{\dag}}) + W\varepsilon + (WA-I)\mu_{x^{\dag}} + b}^{2},
  \end{multline*}
  expand the square, and use that $x^{\dag}$ and $\varepsilon$ are
  uncorrelated
  and that $\EE_{\varepsilon} \varepsilon = 0$ and get
  \begin{align*}
    \EE_{x^{\dag},\varepsilon}\norm{WA(x^{\dag}+\varepsilon) + b -x^{\dag}}^{2} & = \EE_{x^{\dag}}\norm{(WA-I)(x^{\dag}-\mu_{x^{\dag}})}^{2} + \EE_{\varepsilon}\norm{W\varepsilon}^{2}\\
    & \qquad + \norm{(WA-I)\mu_{x^{\dag}} + b}^{2}.
  \end{align*}
  Finally we use two instances the identity
  \[
  \EE_z \norm{L (z - \EE_z(z))}^{2} = \scp{L \Cov(z)}{L},
  \]
  which is valid for any random variable \(z\) and matrix \(L\) with
  compatible sizes, and obtain
  \begin{align*}
    &\EE_{x^{\dag}}\norm{(WA-I)(x^{\dag}-\mu_{x^{\dag}})}^{2} + \EE_{\varepsilon}\norm{W\varepsilon}^{2}\\
         & = \scp{(WA-I)\Sigma_{x^{\dag}}}{WA-I} + \scp{W\Sigma_{\varepsilon}}{W}\\
         & = \scp{WA\Sigma_{x^{\dag}}A^{T}}{W} -
           2\scp{\Sigma_{x^{\dag}}A^{T}}{W} + \tr(\Sigma_{x^{\dag}}) +
           \scp{W\Sigma_{\varepsilon}}{W}. \qedhere
  \end{align*}
\end{proof}

\begin{remark}
  Lemma \ref{lem:loss-decomposition} shows that the risks of affine linear methods
  decompose naturally into three parts: The \emph{variance} term
  $\scp{W\Sigma_{\varepsilon}}{W}$ that occurs through the noise, the
  \emph{operator bias} term $\scp{(WA-I)\Sigma_{x^{\dag}}}{WA-I}$ that is due to the approximation of the inversion process, and the
  \emph{offset bias} $\norm{(WA-I)\mu_{x^{\dag}} + b}^{2}$ that is due to the choice of offset $b$ in the  reconstruction.

  Note that the operator bias and the variance are always non-negative as we could interpret the expressions as weighted Frobenius inner products.
\end{remark}

The above result allows to decouple the minimization in the upper
level problems with respect to the offset $b$ in the affine linear
map. We immediately see that the optimal offset $b$ (provided a given $W$) is $b = (I-WA)\mu_{x^{\dag}}$. For further use, we formulate this as a corollary:

\begin{corollary}[Optimal affine offset]\label{cor:opt-offset}
  Let $W\in \RR^{n\times m}$.
  The solution \(b^{*} \in \RR^n\) of the problem to minimize the
  risk of the affine reconstruction map $y \mapsto W y +
  b$, i.e., of
  \begin{align*}
    \minimize_{b} \EE_{x^{\dag},\varepsilon}\norm{W(Ax^{\dag} + \varepsilon) + b-x^{\dag}}^{2}
  \end{align*}
  is given by
  \begin{align*}
    b^{*} = (I-WA)\mu_{x^{\dag}}\ .
  \end{align*}
\end{corollary}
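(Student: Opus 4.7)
The plan is to read off the result directly from the decomposition established in Lemma~\ref{lem:loss-decomposition}. Applied with fixed $W$, that lemma writes $\mathcal{R}(W,b)$ as a sum of terms in which $b$ appears only inside the \emph{offset bias} $\norm{(WA-I)\mu_{x^{\dag}} + b}^{2}$; the \emph{variance} and \emph{operator bias} terms, together with $\tr(\Sigma_{x^{\dag}})$, are independent of $b$. Thus the minimization in $b$ decouples completely from the choice of $W$.

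The remaining one-variable problem is to minimize a squared Euclidean norm in $b$. Since $\norm{(WA-I)\mu_{x^{\dag}} + b}^{2} \geq 0$ with equality iff
\begin{align*}
  b = -(WA-I)\mu_{x^{\dag}} = (I-WA)\mu_{x^{\dag}},
\end{align*}
the unique minimizer is $b^{*} = (I-WA)\mu_{x^{\dag}}$, as claimed.

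There is really no obstacle here: the work was done in Lemma~\ref{lem:loss-decomposition}, which separated the contribution of $b$ from everything else. The only thing worth emphasizing in the write-up is that the optimal offset does not depend on the noise covariance $\Sigma_{\varepsilon}$ or on $\Sigma_{x^{\dag}}$, only on $W$, $A$, and the mean $\mu_{x^{\dag}}$; in particular, the optimal offset satisfies $Wa + b^{*} = \mu_{x^{\dag}} + W(a - A\mu_{x^{\dag}})$, so the reconstruction map recovers $\mu_{x^{\dag}}$ exactly at $y = A\mu_{x^{\dag}}$. This observation will likely be useful in subsequent sections, but it is not needed to prove the corollary itself.
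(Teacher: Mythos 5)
Your proof is correct and follows exactly the paper's route: the paper also reads the result off the decomposition in Lemma~\ref{lem:loss-decomposition}, observing that $b$ enters only through the offset bias $\norm{(WA-I)\mu_{x^{\dag}}+b}^{2}$, which is made zero by $b^{*}=(I-WA)\mu_{x^{\dag}}$. The extra remark about the map at $y=A\mu_{x^{\dag}}$ is harmless but not needed.
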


Moreover, by taking the derivative of the right hand side in \eqref{eq:pop-risk} with respect to \(W\) we can easily rederive the formula for the optimal affine linear reconstruction map, namely the well known Linearized Minimum Mean Square Error (LMMSE) estimator~\cite[Theorem 12.1]{kay1993fundamentals} which has also been rederived in~\cite[Theorem 3.1]{alberti2021learning}:

\begin{corollary}[LMMSE estimation]\label{cor:llmmse}
  Let $\hat{x} = W(Ax^{\dag}+\varepsilon) + b$ with $W\in\RR^{n\times m}$ and $b\in\RR^{n}$. It holds that the problem
  \begin{align*}
    \min_{W,b} \EE_{x^{\dag},\varepsilon}\norm{\hat{x}-x^{\dag}}^{2}
  \end{align*}
  is solved by  
  \begin{align*}
    W^{*}  = \Sigma_{x^{\dag}}A^{T}(A\Sigma_{x^{\dag}}A^{T} + \Sigma_{\varepsilon})^{-1},\qquad\text{and}\qquad b^{*}  = (I-W^{*}A)\mu_{x^{\dagger}}\ .
  \end{align*}    
\end{corollary}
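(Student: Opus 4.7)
The plan is to reduce the joint minimization over $(W,b)$ to a one-variable problem by invoking Corollary~\ref{cor:opt-offset}, and then to obtain $W^{*}$ via the first-order optimality condition on the quadratic risk expression supplied by Lemma~\ref{lem:loss-decomposition}.

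First, for fixed $W$ Corollary~\ref{cor:opt-offset} pins down the minimizer in $b$ as $b^{*}(W) = (I-WA)\mu_{x^{\dag}}$, which annihilates the offset-bias term in~\eqref{eq:pop-risk}. The residual objective then depends only on $W$ and reads
\[
F(W) = \scp{WA\Sigma_{x^{\dag}}A^{T}}{W} - 2\scp{\Sigma_{x^{\dag}}A^{T}}{W} + \scp{W\Sigma_{\varepsilon}}{W} + \tr(\Sigma_{x^{\dag}}).
\]
Since $A\Sigma_{x^{\dag}}A^{T}+\Sigma_{\varepsilon}\mgeq 0$, the map $F$ is a convex quadratic in $W$, so it suffices to solve $\nabla_{W}F(W)=0$.

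Next, I would compute the Frobenius gradient termwise. Using the identities $\nabla_{W}\scp{WB}{W}=W(B+B^{T})$ and $\nabla_{W}\scp{C}{W}=C$, together with the symmetry of $\Sigma_{x^{\dag}}$ and $\Sigma_{\varepsilon}$, this produces
\[
\tfrac12 \nabla_{W}F(W) = W\p{A\Sigma_{x^{\dag}}A^{T}+\Sigma_{\varepsilon}} - \Sigma_{x^{\dag}}A^{T}.
\]
Setting this to zero yields the Wiener-type normal equation, whose solution (under the invertibility assumption discussed below) is exactly the claimed $W^{*}$. The matching formula $b^{*} = (I-W^{*}A)\mu_{x^{\dag}}$ then follows immediately from Corollary~\ref{cor:opt-offset}.

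The only delicate point, and hence the main obstacle, is the invertibility of $A\Sigma_{x^{\dag}}A^{T}+\Sigma_{\varepsilon}$. When $\Sigma_{\varepsilon}\mge 0$ this is automatic, which covers the intended regime of the paper; in the degenerate case one checks that $\Sigma_{x^{\dag}}A^{T}$ lies in the range of $A\Sigma_{x^{\dag}}A^{T}+\Sigma_{\varepsilon}$, so the normal equations still admit a solution and the inverse can be read as a Moore--Penrose pseudoinverse. Either way, the formula stated in the corollary delivers a minimizer of the risk, and convexity of $F$ shows that every minimizer in $W$ yields the same reconstruction map.
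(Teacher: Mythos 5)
Your proposal is correct and follows essentially the same route as the paper: fix the offset via Corollary~\ref{cor:opt-offset}, then minimize the remaining quadratic in $W$ from Lemma~\ref{lem:loss-decomposition} by setting the Frobenius gradient to zero, which gives the normal equation $W(A\Sigma_{x^{\dag}}A^{T}+\Sigma_{\varepsilon})=\Sigma_{x^{\dag}}A^{T}$. Your extra observations on convexity of the reduced objective and on handling a possibly singular $A\Sigma_{x^{\dag}}A^{T}+\Sigma_{\varepsilon}$ are correct refinements that the paper's proof leaves implicit.
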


\begin{proof}
  The optimality of $b^{*}$ is clear by Corollary~\ref{cor:opt-offset}.
  Hence we have from Lemma~\ref{lem:loss-decomposition}  
\begin{align*}
\mathcal{R}(W,b^{*}) = \scp{WA\Sigma_{x^{\dag}}A^{T}}{W} - 2\scp{\Sigma_{x^{\dag}}A^{T}}{W} + \tr(\Sigma_{x^{\dag}}) + \scp{W\Sigma_{\varepsilon}}{W}.
\end{align*}
  To calculate the optimal $W$ we take the gradient of this with respect to $W$ and get the optimality condition
  \begin{align*}
    2 WA\Sigma_{x^{\dag}}A^{T} - 2\Sigma_{x^{\dag}}A^{T} + 2W\Sigma_{\varepsilon} = 0,
  \end{align*}
  and this is solved by 
  \begin{equation*}
    W = \Sigma_{x^{\dag}}A^{T}(A\Sigma_{x^{\dag}}A^{T} + \Sigma_{\varepsilon})^{-1}.\qedhere
  \end{equation*} 
\end{proof}

\section{Optimal regularization when the noise model is learned}
\label{sec:learning-noise-and-reg}

Now we start to analyze the learning problems where we learn noise weights $\Omega$, regularizers $R$ or $M$, respectively, and offsets $x_{0}$. We begin with the least general case, namely Tikhonov regularization with noise weight:

\begin{theorem}[Optimal Tikhonov regularization and noise weight]
  The bilevel learning problem
  \begin{align*}
    \minimize_{\Omega,R,x_{0}}& &&\EE_{x^{\dag},\varepsilon}\norm{\hat{x} - x^{\dag}}^{2}\\
    \mathrm{s.t.}& &&\hat{x} = \argmin_{x}\tfrac12\norm{Ax-y}_{\Omega}^{2} + \tfrac12\norm{R(x-x_{0})}^{2}
  \end{align*}
  is solved by any $R$, $\Omega$ and $x_{0}$ with
\begin{align*}
\Omega = \Sigma_{\varepsilon}^{-1},\quad\quad R^{T}R = \Sigma_{x^{\dag}}^{-1},\quad\text{and}\quad x_{0} = \mu_{x^{\dag}}.
\end{align*}
Especially, the respective affine map is equal to the LMMSE map from Corollary~\ref{cor:llmmse}.
\end{theorem}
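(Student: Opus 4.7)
The plan is to exploit the observation that Tikhonov regularization with noise weight produces an affine map $y\mapsto Wy+b$, as recorded in Table~\ref{tab:regularizaton-methods}. Hence, by Corollary~\ref{cor:llmmse}, its risk is bounded below by the LMMSE risk attained by $(W^{*}, b^{*})$. It therefore suffices to substitute the proposed values $\Omega = \Sigma_{\varepsilon}^{-1}$, $R^{T}R = \Sigma_{x^{\dag}}^{-1}$, and $x_{0} = \mu_{x^{\dag}}$ into the formulas for $W$ and $b$ in Table~\ref{tab:regularizaton-methods} and verify that the resulting affine map coincides with the LMMSE map. This also explains the form of the statement: the regularized solution depends on $R$ only through $R^{T}R$, so any square root of $\Sigma_{x^{\dag}}^{-1}$ is admissible.

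The core computation is the push-through identity
\[
(A^{T}\Sigma_{\varepsilon}^{-1}A + \Sigma_{x^{\dag}}^{-1})^{-1} A^{T}\Sigma_{\varepsilon}^{-1} = \Sigma_{x^{\dag}}A^{T}(A\Sigma_{x^{\dag}}A^{T} + \Sigma_{\varepsilon})^{-1},
\]
which I would verify by multiplying through with the two inverted factors and checking the algebraic identity
\[
(A^{T}\Sigma_{\varepsilon}^{-1}A + \Sigma_{x^{\dag}}^{-1})\,\Sigma_{x^{\dag}}A^{T} = A^{T}\Sigma_{\varepsilon}^{-1}(A\Sigma_{x^{\dag}}A^{T} + \Sigma_{\varepsilon}),
\]
which is immediate after expansion. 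This yields $W = W^{*}$. For the offset, I would then exploit
\[
WA + (A^{T}\Sigma_{\varepsilon}^{-1}A + \Sigma_{x^{\dag}}^{-1})^{-1}\Sigma_{x^{\dag}}^{-1} = (A^{T}\Sigma_{\varepsilon}^{-1}A + \Sigma_{x^{\dag}}^{-1})^{-1}(A^{T}\Sigma_{\varepsilon}^{-1}A + \Sigma_{x^{\dag}}^{-1}) = I
\]
to rewrite the Tikhonov offset from Table~\ref{tab:regularizaton-methods} as $b = (I - WA)\mu_{x^{\dag}}$, which matches $b^{*}$ from Corollary~\ref{cor:opt-offset}.

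There is no real obstacle in the argument; the only subtlety is that the theorem tacitly assumes $\Sigma_{\varepsilon}$ and $\Sigma_{x^{\dag}}$ are invertible, which I would flag as a standing hypothesis. Under invertibility, $A^{T}\Omega A + R^{T}R \mge 0$, so the lower-level minimizer is unique and the affine representation used in the table is legitimate, making the above substitution and comparison with Corollary~\ref{cor:llmmse} valid.
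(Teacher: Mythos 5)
Your proposal is correct and follows essentially the same route as the paper: verify that with $\Omega=\Sigma_{\varepsilon}^{-1}$, $R^{T}R=\Sigma_{x^{\dag}}^{-1}$, $x_{0}=\mu_{x^{\dag}}$ the Tikhonov map $(W,b)$ coincides with the LMMSE estimator of Corollary~\ref{cor:llmmse}, which minimizes the risk over all affine maps and hence over the Tikhonov subclass. Your explicit push-through identity and the remark that invertibility of $\Sigma_{\varepsilon}$ and $\Sigma_{x^{\dag}}$ is tacitly assumed are just slightly more detailed versions of the paper's ``move the inverses to the other side'' step.
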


\begin{proof}
  The affine map that solves the lower level problem is 
\begin{align*}
\hat{x} = (A^{T}\Omega A + R^TR)^{-1}(A^{T}\Omega y + R^{T}Rx_{0}).
\end{align*}
We show that the choices for $\Omega$, $R$ and $x_{0}$ above turn this affine map into the LMMSE estimator from Corollary~\ref{cor:llmmse}:
It holds that $\hat{x} = Wy + b$ with 
\begin{align*}
W = (A^{T}\Omega A + R^TR)^{-1}A^{T}\Omega,\qquad b = (A^{T}\Omega A + R^TR)^{-1}R^{T}Rx_{0}.
\end{align*}
Setting $W = W^{*}$ (with $W^{*}$ from Corollary~\ref{cor:llmmse}) and moving the inverses to the respective other sides shows that $W=W^{*}$ if 
\begin{align*}
R^{T}R \Sigma_{x^{\dag}} = I_{n},\quad\text{and}\quad \Omega\Sigma_{\varepsilon} = I_{m}.
\end{align*}
Especially we conclude that $W^{*} = (A^{T}\Sigma_{\varepsilon}^{-1}A + \Sigma_{x^{\dag}}^{-1})^{-1}A^{T}\Sigma_{\varepsilon}^{-1}$. With this we compute 
\begin{align*}
  b^{*} = \mu_{x^{\dag}} - (A^{T}\Sigma_{\varepsilon}^{-1}A + \Sigma_{x^{\dag}}^{-1})^{-1}A^{T}\Sigma_{\varepsilon}^{-1}A\mu_{x^{\dag}} = (A^T\Sigma_{\varepsilon}^{-1}A + \Sigma_{x^{\dag}}^{-1})^{-1}\Sigma_{x^{\dag}}^{-1}\mu_{x^{\dag}}.
\end{align*}
Equating this with the offset $(A^{T}\Sigma_{\varepsilon}^{-1}A + \Sigma_{x^{\dag}}^{-1})^{-1}\Sigma_{x^{\dag}}^{-1}x_{0}$ of the optimal map from Tikhonov regularization we get $\mu_{x^{\dag}} = x_{0}$.
\end{proof}

The above result is basically already contained in~\cite{alberti2021learning}, but there the authors had fixed $\Omega = \Sigma_{\varepsilon}^{-1}$ already and only showed that learning $R$ and $x_{0}$ leads to the LMMSE estimator.

Since Tikhonov regularization is more special than  quadratic and Lavrentiev regularization (including the noise weights), we have also shown that these methods can also achieve to learn the LMMSE estimate which is the best possible affine linear map.

In other words, we have derived that the respective optimal risks are ordered as 
\begin{align*}
\mathcal{R}_{\text{Aff}}= \mathcal{R}_{\text{Lav}(\Omega)} = \mathcal{R}_{\text{Quad}(\Omega)} = \mathcal{R}_{\text{Tikh}(\Omega)} \leq \mathcal{R}_{\text{Lav}}\leq \mathcal{R}_{\text{Quad}} \leq \mathcal{R}_{\text{Tikh}}.
\end{align*}
In the following we investigate the remaining inequalities.

\section{Learning regularizers without a noise model}
\label{sec:learning-regularizers}

In this section we start with the most general method: Learning the Lavrentiev regularization of the normal equations (and the offset). The problem we aim to solve is
\begin{align*}
  \mathcal{R}_{\textnormal{Lav}} = \min_{M,x_{0}}& \quad \EE_{x^{\dag},\varepsilon}\norm{\hat{x}(Ax^{\dag} + \varepsilon) - x^{\dag}}^{2}\\
  \text{s.t.}\ & \hat{x}(Ax^{\dag}+\varepsilon) = (A^{T}A + M)^{-1}(A^{T}(Ax^{\dag}+\varepsilon) + Mx_{0})
\end{align*}

\begin{theorem}[Optimal Lavrentiev regularization]\label{thm:optimal-lavrentiev}
  Let the matrices $A^{T}A$, $\Sigma_{x^{\dag}}$ and $\Sigma_{\varepsilon}$ be invertible. The bilevel learning problem
  \begin{align*}
    \minimize_{M,x_{0}}\ & \EE_{x,\varepsilon}\norm{\hat{x} - x^{\dag}}^{2}\\
    \text{s.t.}& \qquad \hat{x} = (A^{T}A + M)^{-1}(A^{T}(Ax^{\dag}+\varepsilon) + Mx_{0})
  \end{align*}
  is solved by all $M$ and $x_{0}$ with
  \begin{align*}
    x_{0} \in \mu_{x^{\dag}} + \ker(M),\qquad M = A^{T}\Sigma_{\varepsilon}A(A^{T}A)^{-1}\Sigma_{x^{\dag}}^{-1}.
  \end{align*}
\end{theorem}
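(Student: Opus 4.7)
My plan is to decouple the offset $x_0$ from the matrix $M$ using Corollary~\ref{cor:opt-offset}, and then minimize over $M$ by a change of variables that turns the Lavrentiev risk into an unconstrained strictly convex quadratic.

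First I would handle $x_0$. The Lavrentiev map is $\hat{x}(y) = Wy + b$ with $W = (A^TA + M)^{-1} A^T$ (independent of $x_0$) and $b = (A^TA + M)^{-1} M x_0$. By Corollary~\ref{cor:opt-offset}, for any fixed $W$ the optimal offset is $b^{*} = (I - WA)\mu_{x^{\dag}}$. Using the identity $I - WA = (A^TA+M)^{-1}(A^TA+M) - (A^TA+M)^{-1}A^TA = (A^TA+M)^{-1} M$, the condition $b = b^{*}$ becomes $(A^TA+M)^{-1} M (x_0 - \mu_{x^{\dag}}) = 0$, i.e.\ $x_0 \in \mu_{x^{\dag}} + \ker(M)$. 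This reproduces the condition on $x_0$ and reduces the outer problem to minimizing the risk over $M$ alone, with the offset-bias term in Lemma~\ref{lem:loss-decomposition} equal to zero.

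Next I would minimize over $M$. With the offset optimized out, the remaining risk is the quadratic function of $W$
\[
\mathcal{R}_0(W) = \scp{W(A\Sigma_{x^{\dag}}A^T + \Sigma_{\varepsilon})}{W} - 2 \scp{\Sigma_{x^{\dag}} A^T}{W} + \tr(\Sigma_{x^{\dag}}),
\]
evaluated along the Lavrentiev family $W(M) = (A^TA+M)^{-1} A^T$. The key step is the substitution $S := (A^TA+M)^{-1}$, so that $W = S A^T$. Since $A^TA$ is invertible, the map $M \leftrightarrow S = (A^TA+M)^{-1}$ is a bijection between $\{M \in \RR^{n\times n} : A^TA+M \text{ invertible}\}$ and $\operatorname{GL}_n(\RR)$. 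Substituting $W = SA^T$ and using cyclicity of the trace yields, after a short computation,
\[
\mathcal{R}_0(S) = \tr(S P S^T) - 2 \tr(\Sigma_{x^{\dag}} S A^T A) + \tr(\Sigma_{x^{\dag}}), \quad P := A^T A \Sigma_{x^{\dag}} A^T A + A^T \Sigma_{\varepsilon} A.
\]
Here $P$ is symmetric positive definite because $A^TA\Sigma_{x^{\dag}}A^TA$ is so under the invertibility assumptions on $A^TA$ and $\Sigma_{x^{\dag}}$ and $A^T\Sigma_{\varepsilon}A \mgeq 0$. Therefore $\mathcal{R}_0$ is strictly convex in $S$ on all of $\RR^{n\times n}$, with unique global minimizer obtained from $\nabla_S \mathcal{R}_0 = 2 S P - 2 \Sigma_{x^{\dag}} A^T A = 0$, namely $S^{*} = \Sigma_{x^{\dag}} A^T A P^{-1}$.

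Finally I would translate back. Since $\Sigma_{x^{\dag}}$, $A^TA$ and $P$ are invertible, $S^{*}$ is invertible and lies in the admissible set, so the unconstrained minimizer coincides with the constrained one. The corresponding $M$ is
\[
M = (S^{*})^{-1} - A^T A = P (A^T A)^{-1} \Sigma_{x^{\dag}}^{-1} - A^T A,
\]
and expanding $P$ and cancelling the $A^TA$ term gives $M = A^T \Sigma_{\varepsilon} A (A^T A)^{-1} \Sigma_{x^{\dag}}^{-1}$, as claimed. I expect the main obstacle to be the substitution step: one has to spot that parameterising by $S$ rather than $M$ makes the problem quadratic and strictly convex, and then verify that the interior optimum $S^{*}$ is invertible so that no boundary issue arises from restricting to invertible $S$.
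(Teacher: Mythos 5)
Your proposal is correct, and it reaches the optimal $M$ by a genuinely different route than the paper. The treatment of the offset is the same in substance (the paper also reduces the offset-bias term of Lemma~\ref{lem:loss-decomposition} to $\norm{(A^{T}A+M)^{-1}M(x_{0}-\mu_{x^{\dag}})}^{2}$ and concludes $x_{0}\in\mu_{x^{\dag}}+\ker(M)$), but for the matrix part the paper keeps the constraint $(A^{T}A+M)W=A^{T}$ explicit, forms a Lagrangian in $(W,M,\Lambda)$, and manipulates the stationarity conditions \eqref{eq:eqt1}--\eqref{eq:eqt3} until $M$ can be isolated, finally checking that $A^{T}A+M$ is invertible. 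You instead reparameterize by $S=(A^{T}A+M)^{-1}$, so that the admissible maps are exactly $W=SA^{T}$ with $S$ invertible, and the reduced risk becomes the quadratic $\tr(SPS^{T})-2\tr(\Sigma_{x^{\dag}}SA^{T}A)+\tr(\Sigma_{x^{\dag}})$ with $P=A^{T}A\Sigma_{x^{\dag}}A^{T}A+A^{T}\Sigma_{\varepsilon}A\mge 0$ (note $P$ is precisely the matrix $B$ of Theorem~\ref{thm:optimal-quadratic}); your computations of the substitution, the gradient, the minimizer $S^{*}=\Sigma_{x^{\dag}}A^{T}AP^{-1}$, its invertibility, and the back-substitution to $M=A^{T}\Sigma_{\varepsilon}A(A^{T}A)^{-1}\Sigma_{x^{\dag}}^{-1}$ all check out, and the resulting $W=S^{*}A^{T}$ is exactly the rearranged form noted in the remark after the theorem. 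What your route buys is a proof of \emph{global} optimality essentially for free: strict convexity in $S$ plus the observation that the unconstrained minimizer is invertible shows the stationary point is the unique global minimum over the admissible set, whereas the paper's Lagrangian argument only identifies a stationary point and leaves its global optimality implicit. What the paper's route buys is uniformity with the next result: the same Lagrangian machinery extends directly to the quadratic case of Theorem~\ref{thm:optimal-quadratic}, where the additional symmetry constraint $M=M^{T}$ destroys the simple convex reparameterization in $S$ and leads instead to the Lyapunov equation.
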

\begin{proof}
  We introduce additional variables $W$ and $b$ and rewrite the bilevel problem as
  \begin{equation}\label{eq:opt-lavrentiev-problem}
  \begin{aligned}
    \minimize_{W,b,M,x_{0}}\ & \EE_{x,\varepsilon}\norm{W(Ax^{\dag}+\varepsilon)+b - x^{\dag}}^{2}\\
    \text{s.t.}\ & W = (A^TA + M)^{-1}A^{T},\quad b = (A^{T}A + M)^{-1}Mx_{0}.
  \end{aligned}
  \end{equation}
  Using the risk decomposition from Lemma~\ref{lem:loss-decomposition} and the definitions of $W$ and $b$
  we see that to minimize in $x_0$, we need to minimize
  \begin{align*}
    \norm{(WA-I)\mu_{x^{\dag}} + b}^{2} &= \norm{(A^TA + M)^{-1}A^{T}A\mu_{x^{\dag}} - \mu  + (A^{T}A + M)^{-1}Mx_{0}} \\
                                   &= \norm{(A^TA + M)^{-1} M\big(x_{0} - \mu_{x^{\dag}} \big)} 
  \end{align*}
  over $x_{0}$, and this is solved by any $x_0 \in \mu_{x^{\dag}}+ \ker(M)$.

  Now we are going to solve \eqref{eq:opt-lavrentiev-problem} for $W$ and $M$. The remaining problem is (where we again used the risk decomposition from Lemma~\ref{lem:loss-decomposition} and also inverted the constraints)
  \begin{align*}
    \minimize_{W,M}\ & \scp{WA\Sigma_{x^{\dag}}A^{T}}{W} - 2\scp{\Sigma_{x^{\dag}}A^{T}}{W} + \tr(\Sigma_{x^{\dag}}) + \scp{W\Sigma_{\varepsilon}}{W}\\
    \text{s.t.}\ & (A^TA + M)W = A^{T}.
  \end{align*}
  The Lagrangian of this nonlinear optimization problems is given by
  (introducing a factor of \(\frac{1}{2}\))
  \begin{align*}
    \mathcal{L}(W,M,\Lambda) & = \tfrac{1}{2}\scp{WA\Sigma_{x^{\dag}}A^{T}}{W} - \scp{\Sigma_{x^{\dag}}A^{T}}{W}  +\tfrac{1}{2} \scp{W\Sigma_{\varepsilon}}{W}\\
             & \qquad+ \scp{(A^{T}A+M)W - A^{T}}{\Lambda},
  \end{align*}
  and thus, the optimality conditions are
  \begin{align}
    \nabla_{W}\mathcal{L} & = WA\Sigma_{x^{\dag}}A^{T} - \Sigma_{x^{\dag}}A^{T} + W\Sigma_{\varepsilon} + (A^{T}A+M)^{T}\Lambda = 0, \label{eq:eqt1}\\
    \nabla_{M}\mathcal{L} & =\Lambda W^{T} = 0, \label{eq:eqt2}\\
    \nabla_{\Lambda}\mathcal{L} & = (A^{T}A+M)W-A^{T} = 0. \label{eq:eqt3} 
  \end{align}
  We transpose~\eqref{eq:eqt2} and multiply it by $A^TA + M$ from the left to get $(A^{T}A + M)W\Lambda^{T} = 0$. From~\eqref{eq:eqt3} we conclude that $A^{T}\Lambda ^{T}= 0$.
  We multiply~\eqref{eq:eqt1} from the right by $A$ and obtain 
  \begin{align*}
  W(A\Sigma_{x^{\dag}}A + \Sigma_{\varepsilon})A - \Sigma_{x^{\dag}}A^{T}A + (A^{T}A + M)^{T}\Lambda A = 0.
  \end{align*}
  The last summand on the left hand side vanishes and we multiply from the left with $A^TA + M$ and get (using~\eqref{eq:eqt3} again) 
  \begin{align*}
  A^{T}(A\Sigma_{x^{\dag}}A^T + \Sigma_{\varepsilon})A = (A^{T}A + M)\Sigma_{x^{\dag}}A^TA.
  \end{align*}
  We can solve this for $M$ and get 
  \begin{align*}
    M & = A^{T}(A\Sigma_{x^{\dag}}A^T + \Sigma_{\varepsilon})A(A^{T}A)^{-1}\Sigma_{x^{\dag}}^{-1} - A^TA\\
    & = A^{T}A + A^{T}\Sigma_{\varepsilon}A(A^{T}A)^{-1}\Sigma_{x^{\dag}}^{-1} - A^{T}A = A^{T}\Sigma_{\varepsilon}A(A^{T}A)^{-1}\Sigma_{x^{\dag}}^{-1} .
  \end{align*}
  It remains to show that $A^{T}A + M$ is invertible. We rewrite 
  \begin{align*}
    A^TA + M & = A^{T}A + A^{T}\Sigma_{\varepsilon}A(A^{T}A)^{-1}\Sigma_{x^{\dag}}^{-1}  \\
    & = (A^{T}A\Sigma_{x^{\dag}}A^{T}A + A^{T}\Sigma_{\varepsilon}A)(A^{T}A)^{-1}\Sigma_{x^{\dag}}^{-1}
  \end{align*}
  and observe that our assumptions imply that all three factors on the right are invertible matrices.
\end{proof}

\begin{remark}
  The map $W = (A^{T}A + M)^{-1}A^{T}$ that corresponds to the optimal Lavrentiev regularizer $M =A^{T}\Sigma_{\varepsilon}A(A^{T}A)^{-1}\Sigma_{x^{\dag}}^{-1}$ can be rearranged into the form
  \begin{multline*}
    (A^TA + A^{T}\Sigma_{\varepsilon}A(A^{T}A)^{-1}\Sigma_{x^{\dag}}^{-1})^{-1} A^T \\= \Bigg(\Big(A^TA\Sigma_{x^{\dag}}A^TA + A^{T}\Sigma_{\varepsilon}A\Big)(A^{T}A)^{-1}\Sigma_{x^{\dag}}^{-1}  \Bigg)^{-1} A^T \\
    = \Sigma_{x^{\dag}} A^{T}A \Bigg(A^TA\Sigma_{x^{\dag}}A^TA + A^{T}\Sigma_{\varepsilon}A  \Bigg)^{-1} A^T.
  \end{multline*}
\end{remark}

\begin{theorem}[Optimal quadratic regularization]\label{thm:optimal-quadratic}
  Assume that the matrix $B:= A^{T}(A\Sigma_{x^{\dag}}A^{T} +
  \Sigma_{\varepsilon})A$ be invertible. Then, the bilevel learning problem
  \begin{align*}
    \minimize_{M,x_{0}} &\ \EE_{x^{\dag},\varepsilon} \norm{\hat{x} - x^{\dag}}^{2}\\
    \text{s.t.} &\qquad \hat{x} = \argmin_{x} \norm{Ax - A(x^{\dag}+\varepsilon)}^{2} + \scp{M(x-x_{0})}{x-x_{0}}
  \end{align*}
  has the following solution: Let $N$ be the unique symmetric solution of the Lyapunov equation 
  \begin{align*}
    A^{T}A\Sigma_{x^{\dag}} + \Sigma_{x^{\dag}}A^{T}A = NB + BN.
  \end{align*}
  If $N$ is invertible, then the optimal $M$ and $x_{0}$ are given by
  \begin{align*}
    x_{0} \in \mu_{x^{\dag}} + \ker(M)\quad\text{and}\quad\quad M = N^{-1} - A^{T}A.
  \end{align*}
\end{theorem}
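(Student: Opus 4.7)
The plan is to adapt the strategy of Theorem~\ref{thm:optimal-lavrentiev} while accommodating the extra symmetry constraint $M \in \SS^n$. The key observation is that the map $M \mapsto N := (A^{T}A + M)^{-1}$ restricts to a bijection between symmetric $M$ with $A^{T}A + M$ invertible and invertible symmetric $N$, so I would reparametrize in $N$, after which the objective becomes a convex quadratic on the vector space $\SS^n$.

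First, I would separate the offset variable exactly as in the Lavrentiev proof: the offset-bias term $\norm{(WA - I)\mu_{x^{\dag}} + b}^{2}$ depends only on $x_0$ through $Mx_0$, so it is annihilated by any $x_0 \in \mu_{x^{\dag}} + \ker(M)$. The remaining task is to minimize the $W$-dependent part of the risk from Lemma~\ref{lem:loss-decomposition} subject to $W = (A^{T}A + M)^{-1}A^{T}$ with $M$ symmetric.

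Next, substituting $W = NA^{T}$ (with $N \in \SS^n$) and repeatedly using cyclicity of the trace, I would simplify the risk to
\begin{align*}
  \mathcal{R}(N) = \tr(NBN) - 2\tr(NA^{T}A\Sigma_{x^{\dag}}) + \tr(\Sigma_{x^{\dag}}),
\end{align*}
with $B = A^{T}(A\Sigma_{x^{\dag}}A^{T} + \Sigma_{\varepsilon})A$. Since $B \mgeq 0$, this is a convex quadratic on $\SS^n$, so any first-order critical point is a global minimizer. The Frobenius gradient of the extension to $\RR^{n \times n}$ equals $BN + NB - 2\Sigma_{x^{\dag}}A^{T}A$ (using that $B$ is symmetric); projecting onto $\SS^n$ by symmetrization yields the optimality condition
\begin{align*}
  BN + NB = A^{T}A\Sigma_{x^{\dag}} + \Sigma_{x^{\dag}}A^{T}A,
\end{align*}
which is precisely the Lyapunov equation in the theorem. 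Because $B$ is symmetric positive definite (by the invertibility hypothesis, together with $A\Sigma_{x^{\dag}}A^{T} + \Sigma_{\varepsilon} \mgeq 0$), all eigenvalue pairs satisfy $\lambda_i + \lambda_j > 0$, and standard Lyapunov theory delivers a unique symmetric solution $N$.

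Finally, whenever this $N$ is invertible, setting $M = N^{-1} - A^{T}A$ recovers a symmetric $M$ with $(A^{T}A + M)^{-1} = N$, so the corresponding $W = (A^{T}A + M)^{-1}A^{T}$ realizes the optimum; combined with $x_0 \in \mu_{x^{\dag}} + \ker(M)$ from the offset step, this produces the claimed solution. The main obstacle I anticipate is the careful bookkeeping around the symmetry constraint: one must check that projecting the unconstrained Frobenius gradient onto $\SS^n$ really yields the symmetrized Lyapunov condition (and not some strictly weaker condition), and one must justify passing back from $N$ to $M$, which is only possible under the invertibility hypothesis on $N$ that the theorem explicitly assumes.
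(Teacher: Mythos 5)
Your proof is correct, and it takes a genuinely different route from the paper. The paper keeps both $W$ and $M$ as variables, enforces the coupling $(A^{T}A+M)W=A^{T}$ and the symmetry $M=M^{T}$ via Lagrange multipliers, and then eliminates the multipliers from the stationarity conditions until the Lyapunov equation emerges (with $N=(A^{T}A+M)^{-1}$, consistent with the theorem statement and with your reading). You instead reparametrize directly in $N=(A^{T}A+M)^{-1}$, which turns the problem into the convex quadratic $\tr(NBN)-2\tr(NA^{T}A\Sigma_{x^{\dag}})+\tr(\Sigma_{x^{\dag}})$ on the subspace $\SS^{n}$; the projected Frobenius gradient condition is exactly the Lyapunov equation, and positive definiteness of $B$ (invertibility plus $A\Sigma_{x^{\dag}}A^{T}+\Sigma_{\varepsilon}\mgeq 0$) gives the unique symmetric solution, just as the paper obtains via Sylvester's theorem and the integral formula. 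Your route buys something the paper's argument does not make explicit: because the reparametrized objective is convex, the critical point is certified as a \emph{global} minimizer, whereas the KKT manipulation in the paper only produces necessary first-order conditions for a non-convex problem in $(W,M)$. The paper's route, in turn, generalizes more readily (the Lavrentiev case in Theorem~\ref{thm:optimal-lavrentiev} is handled by the same Lagrangian template with the symmetry constraint dropped). The two points you flag as delicate are handled correctly: symmetric and skew-symmetric matrices are Frobenius-orthogonal, so projecting the gradient onto $\SS^{n}$ is the right stationarity condition on that subspace, and the passage back from $N$ to $M=N^{-1}-A^{T}A$ is exactly where the theorem's invertibility hypothesis on $N$ enters—if the Lyapunov solution is invertible it lies in the (open) feasible set of invertible symmetric $N$, so the relaxation from that set to all of $\SS^{n}$ is tight, which is the one small step worth stating explicitly in a written version.
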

\begin{proof}
  In the bilevel problem we assume without loss of generality that the $M$ is symmetric.
  We can replace the lower level problem with the optimality condition
  and add the symmetry of $M$ as an additional constraint to obtain the non-linear problem
  \begin{align*}
    \min_{M,x_{0}} &\ \EE_{x^{\dag},\varepsilon} \norm{\hat{x} - x^{\dag}}^{2}\\
    \text{s.t.}& \qquad \hat{x} = (A^{T}A + M)^{-1}(A^{T}(Ax^{\dag}+\varepsilon) + Mx_{0})\quad\text{and}&
                 M &= M^{T}.
  \end{align*}
  Similarly to the proof of Theorem~\ref{thm:optimal-lavrentiev} we
  use the risk decomposition from Lemma~\ref{lem:loss-decomposition} to observe that $x_{0} = \mu_{x^{\dag}} + \ker(M)$ solves the minimization over $x_{0}$.
  To find the optimal $M$ we again introduce the variable $W$ and arrive at
  \begin{align*}
    \min_{M,W} & \ \scp{WA\Sigma_{x^{\dag}}A^{T}}{W} - 2\scp{\Sigma_{x^{\dag}}A^{T}}{W} + \tr(\Sigma_{x^{\dag}}) + \scp{W\Sigma_{\varepsilon}}{W}\\
    \text{s.t.}& \qquad (A^{T}A + M)W = A^{T},\quad\text{and}\quad M = M^{T}.
  \end{align*}
  The Lagrangian of this is (introducing a factor of $\tfrac12$) 
  \begin{align*}
    \mathcal{L}(W,M,\Theta,\Lambda) & = \tfrac12 \scp{WA\Sigma_{x^{\dag}}A^{T}}{W} - 2\scp{\Sigma_{x^{\dag}}A^{T}}{W} + \tr(\Sigma_{x^{\dag}}) + \scp{W\Sigma_{\varepsilon}}{W}\\
               & \qquad + \scp{(A^{T}A+M)W - A^{T}}{\Lambda} + \scp{M - M^{T}}{\Theta}
  \end{align*}
  and the condition that its derivatives have to vanish are 
  \begin{align}
    0=\nabla_{W}\mathcal{L} & = WA\Sigma_{x^{\dag}}A^{T} - \Sigma_{x^{\dag}}A^{T} + W\Sigma_{\varepsilon} + (A^{T}A+M)^{T}\Lambda \label{eq:eq1}\\
    0=\nabla_{M}\mathcal{L} & =M - M^{T} + \Lambda W^{T}     \label{eq:eq2}\\
    0=\nabla_{\Lambda}\mathcal{L} & = (A^{T}A+M)W-A^{T}     \label{eq:eq3} \\
    0=\nabla_{\Theta}\mathcal{L} & = M - M^{T}.\label{eq:eq4}
  \end{align}
  It follows that
  \begin{align}
    \eqref{eq:eq4} &\Longrightarrow  M = M^{T} \label{eq:eq4-2}\\
    \eqref{eq:eq3} &\Longrightarrow W = (A^{T}A+M)^{-1}A^{T} \label{eq:eq3-2}\\
    \eqref{eq:eq2} &\Longrightarrow \Lambda W^{T}+ W \Lambda^{T}=0\label{eq:eq2-2}
  \end{align}  
  From~\eqref{eq:eq1} we get
  \begin{align}
    W(A\Sigma_{x^{\dag}}A^{T} + \Sigma_{\varepsilon}) - \Sigma_{x^{\dag}}A^{T} + (A^{T}A+M)\Lambda  = 0   \label{eq:eq5}.
  \end{align}
  and \eqref{eq:eq3-2} and \eqref{eq:eq2-2} give us
  \begin{align*}
    \Lambda A(A^{T}A + M)^{-1} + (A^{T}A + M)^{-1}A^{T}\Lambda^{T} = 0.
  \end{align*}
  We cancel the inverses by multiplying from the left and right by $(A^{T}A + M)$ to arrive at
  \begin{equation}
    (A^{T}A+M)\Lambda A + A^{T} \Lambda^{T}(A^{T}A+M) = 0.      \label{eq:eq6}
  \end{equation}
  From~\eqref{eq:eq5}, it follows that :
  \begin{align*}
    0 = (A^{T}A+M)^{-1}A^{T}(A\Sigma_{x^{\dag}}A^{T} + \Sigma_{\varepsilon}) - \Sigma_{x^{\dag}}A^{T} + (A^{T}A+M)\Lambda 
  \end{align*}
  and multiplying from the right by $A$ gives us
  \begin{align*}
    0 = (A^{T}A+M)^{-1}A^{T}(A\Sigma_{x^{\dag}}A^{T} + \Sigma_{\varepsilon})A - \Sigma_{x^{\dag}}A^{T}A + (A^{T}A+M)\Lambda A.
  \end{align*}
  We add the transpose of the equality to itself and 
  use~\eqref{eq:eq6} to observe that two terms cancel each other and arrive at
  \begin{align*}
    A^{T} A\Sigma_{x^{\dag}} + \Sigma_{x^{\dag}}A^{T}A  & = (A^{T}A+M)^{-1}A^{T}(A\Sigma_{x^{\dag}}A^{T}+ \Sigma_{\varepsilon})A\\
    & \qquad+ A^{T} (A\Sigma_{x^{\dag}}A^{T} + \Sigma_{\varepsilon})A(A^{T}A+M)^{-1}.
  \end{align*}
  With $N = A^{T}A + M$ this is exactly the stated Lyapunov equation, and the existence of a unique solution follows from Sylvester's theorem \cite[Thm. VII.2.1]{bhatia2013matrix} since $B$ is invertible and this can be seen to be symmetric from the explicit formula
  \begin{align*}
    N = \int\limits_{0}^{\infty}e^{-tB}(A^{T} A\Sigma_{x^{\dag}} + \Sigma_{x^{\dag}}A^{T}A)e^{-tB}\dd t,
  \end{align*}
  cf.~\cite[Section 5.3]{lancaster1995algebraic}.
\end{proof}

\begin{remark}\label{rem:formula-lyapunov}
  In this case we also have a formula for the solution $M$ that is more useful for numerical implementation: We diagonalize $B = U\diag(\beta_{1},\dots,\beta_{n})U^{T}$ with $\beta_{i}>0$ for $i=1,\dots,n$ and $U = [u_{1},\dots, u_{n}]$ and set $D := A^{T}A\Sigma_{x^{\dag}} + \Sigma_{x^{\dag}}A^{T}A$ and get the solution as
  \begin{align}\label{eq:best-M-quadrativ}
    M = N^{-1} - A^{T}A\quad\text{with}\quad N = U \left( \frac{\scp{u_{i}}{Du_{j}}}{\beta_{i} + \beta_{j}} \right)_{i,j}U^{T}.
  \end{align}
  In Theorem~\ref{thm:optimal-quadratic} we had to assume that $N$ is invertible and we do not know if this is always fulfilled. In our numerical experiments this was always the case. 

  From the last equality of the proof of Theorem~\ref{thm:optimal-quadratic} we see a necessary condition for $N$ to be positive definite, namely when $A^{T}A\Sigma_{x^{\dag}} + \Sigma_{x^{\dag}}A^{T}A$ is positive definite.
  The resulting $M$ is not guaranteed to be positive semidefinite, even in this case.
  It may seem counterintuitive at first that an indefinite matrix with some negative eigenvalues is suitable for regularization.
  However, this phenomenon has been observed even stronger in the case of ridge regression where sometimes even the regularization parameter can be negative~\cite{kobak2020optimal}.
\end{remark}

We have derived optimal regularization methods for all models that we proposed in Section~\ref{sec:setup} except for Tikhonov regularization without noise weight and this remains an open problem. In Secion~\ref{sec:numerical-experiments} we will do a numerical experiment in which we learn the respective map $\hat{x}_{\theta}^{\textnormal{Tikh}}$ by gradient descent to compare the performance with the other methods.

\section{Discussion}
\label{sec:discussion}

Let us recall the main results from Section~\ref{sec:learning-regularizers}: The optimal matrix $M$ for Lavrentiev regularization of the normal equations is 
\begin{align*}
M = A^{T}\Sigma_{\varepsilon}A(A^{T}A)^{-1}\Sigma_{x^{\dag}}^{-1},
\end{align*}
and the optimal $M$ for the quadratic regularization is of the form
\begin{align*}
M = N^{-1} - A^{T}A
\end{align*}
where $N$ is the solution of a Lyapunov equation.

From this we can already suspect a few facts:
\begin{itemize}
\item Since the $M$ from the Lavrentiev regularization is in general not symmetric, the optimal quadratic regularization is in general worse in the sense that $\mathcal{R}_{\textnormal{Lav}}<\mathcal{R}_{\textnormal{Quad}}$, i.e. there is a performance gap between these methods.
\item The optimal $M$ for quadratic regularization does not seem to be positive definite in general (it only is if $N^{-1}\mgeq A^{T}A$). This implies that it is to be expected that there a performance gap between Tikhonov and quadratic regularization in the sense that $\mathcal{R}_{\textnormal{Quad}}<\mathcal{R}_{\textnormal{Tikh}}$.
\end{itemize}

Here is a result that shows when the optimal Lavrentiev regularization (without noise weight) is in fact as good LMMSE:
\begin{theorem} \label{thm:lavrentiev_equals_lmmse}
  Assume that the matrices $(A^{T}A + A^{T}\Sigma_{\varepsilon}A(A^{T}A)^{-1}\Sigma_{x^{\dag}}^{-1})^{-1}$ and $(A\Sigma_{x^{\dag}}A^T + \Sigma_{\varepsilon})^{-1}$ exist.
  Then the following are equivalent:
  \begin{enumerate}
  \item The best linear map for Lavrentiev regularization
    \begin{align*}
      W_{\textnormal{Lav}}:= (A^{T}A + A^{T}\Sigma_{\varepsilon}A(A^{T}A)^{-1}\Sigma_{x^{\dag}}^{-1})^{-1}A^{T}
    \end{align*}
    is equal to the linear map from the LLMSE estimator
    \begin{align*}
      W_{\textnormal{LMMSE}} := \Sigma_{x^{\dag}}A^{T}(A\Sigma_{x^{\dag}}A^{T} + \Sigma_{\varepsilon})^{-1};
    \end{align*}
  \item the noise covariance $\Sigma_{\varepsilon}$ leaves the kernel of $A^{T}$ invariant, i.e., if we denote by $P_{\ker(A^{T})}$ the projection onto the kernel of $A^{T}$, exactly if $A^{T}\Sigma_{\varepsilon}P_{\ker(A^{T})}=0$.
  \end{enumerate}
\end{theorem}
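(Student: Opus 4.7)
The plan is to reduce the equality $W_{\textnormal{Lav}} = W_{\textnormal{LMMSE}}$ to a simple matrix identity by clearing both inverses, and then to recognize the remaining expression as the orthogonal projection onto $\ker(A^T)$. Throughout, I would use that the stated hypotheses force $\Sigma_{x^{\dag}}$, $A^TA$, and $C := A\Sigma_{x^{\dag}}A^T + \Sigma_{\varepsilon}$ all to be invertible: $\Sigma_{x^{\dag}}^{-1}$ and $(A^TA)^{-1}$ already appear inside the assumed inverse defining $W_{\textnormal{Lav}}$, and $C^{-1}$ is assumed directly.

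First I would write down $W_{\textnormal{Lav}} = W_{\textnormal{LMMSE}}$ as the equation
\[
  (A^T A + A^T\Sigma_{\varepsilon}A(A^TA)^{-1}\Sigma_{x^{\dag}}^{-1})^{-1}A^T = \Sigma_{x^{\dag}}A^T\,C^{-1},
\]
and clear both inverses by multiplying from the left by $A^T A + A^T\Sigma_{\varepsilon}A(A^TA)^{-1}\Sigma_{x^{\dag}}^{-1}$ and from the right by $C$. Since these factors are invertible, this step is an equivalence, yielding
\[
  A^T(A\Sigma_{x^{\dag}}A^T + \Sigma_{\varepsilon}) = \bigl(A^T A + A^T\Sigma_{\varepsilon}A(A^TA)^{-1}\Sigma_{x^{\dag}}^{-1}\bigr)\Sigma_{x^{\dag}}A^T.
\]

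Next I would expand both sides. The left side is $A^TA\Sigma_{x^{\dag}}A^T + A^T\Sigma_{\varepsilon}$, and the right side simplifies, using the cancellation $\Sigma_{x^{\dag}}^{-1}\Sigma_{x^{\dag}} = I$, to $A^TA\Sigma_{x^{\dag}}A^T + A^T\Sigma_{\varepsilon}A(A^TA)^{-1}A^T$. Subtracting the common summand $A^TA\Sigma_{x^{\dag}}A^T$ reduces the identity to
\[
  A^T\Sigma_{\varepsilon} = A^T\Sigma_{\varepsilon}A(A^TA)^{-1}A^T, \qquad \text{i.e.} \qquad A^T\Sigma_{\varepsilon}\bigl(I - A(A^TA)^{-1}A^T\bigr) = 0.
\]

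Finally, since $A^TA$ is invertible, $A(A^TA)^{-1}A^T$ is the orthogonal projection onto $\operatorname{range}(A)$, and its complementary projection $I - A(A^TA)^{-1}A^T$ equals $P_{\ker(A^T)}$ because $\operatorname{range}(A)^{\perp} = \ker(A^T)$. Substituting gives exactly the condition $A^T\Sigma_{\varepsilon}P_{\ker(A^T)} = 0$ of part (2). All steps were reversible equivalences, which proves both directions simultaneously. There is no genuinely hard step here; the main thing to be careful about is tracking the invertibility hypotheses when clearing the inverses and recognizing the projection identity at the end.
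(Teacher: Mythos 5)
Your proposal is correct and takes essentially the same route as the paper's proof: equate $W_{\textnormal{Lav}}$ and $W_{\textnormal{LMMSE}}$, clear both inverses, cancel the common term $A^TA\Sigma_{x^{\dag}}A^T$, and identify $I - A(A^TA)^{-1}A^T$ with $P_{\ker(A^T)}$. Your version merely spells out the expansion and cancellation steps in more detail, which is fine.
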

\begin{proof}
  We equate $W_{\textnormal{Lav}}$ and $W_{\textnormal{LMMSE}}$ and bring both inverse to the other sides to arrive at 
  \begin{align*}
    W_{\textnormal{Lav}} & = W_{\textnormal{LMMSE}}\\
    \iff & A^{T}(A\Sigma_{x^{\dag}}A^{T} + \Sigma_{\varepsilon}) = (A^{T}A + A^{T}\Sigma_{\varepsilon}A(A^{T}A)^{-1}\Sigma_{x^{\dag}}^{-1})\Sigma_{x^{\dag}}A^{T}\\
    \iff & A^{T}\Sigma_{\varepsilon} = A^{T}\Sigma_{\varepsilon}A(A^{T}A)^{-1}A^{T} \\
    \iff & A^{T}\Sigma_{\varepsilon}(I - A(A^{T}A)^{-1}A^{T}) = 0
  \end{align*}
  which is exactly the stated result since $I - A(A^{T}A)^{-1}A^{T} = P_{\ker(A^T)}$.
\end{proof}

As a consequence we may state:
\begin{quote}
  Learning how to regularize without also learning the noise weight is inferior to learning the noise weight and the regularizer as soon as $A^{T}\Sigma_{\varepsilon}P_{\ker(A^{T})}\neq 0$.
\end{quote}

Here a few special cases, when the condition $A^{T}\Sigma_{\varepsilon}P_{\ker(A^T)}$ is fulfilled:
\begin{itemize}
\item When $\ker(A) = \left\{ 0 \right\}$. Since we assume $m\geq n$ here in general (since we assume that $A^{T}A\in\RR^{n\times n}$ is invertible), this is fulfilled for invertible $A\in\RR^{n\times n}$.
\item When the noise covariance fulfills $\Sigma_{\varepsilon} = \sigma^{2}I$, i.e., when the noise is i.i.d.\ normally distributed with mean zero (since then $\Sigma_{\varepsilon}$ leaves every subspace invariant). This situation is often observed in numerical experiments when noise is artificially added by 
\begin{align*}
y = Ax^{\dag} + \varepsilon,\quad \varepsilon\sim \mathcal{N}(0,\sigma^{2}I),
\end{align*}
i.e., in code something like  \texttt{ydelta = y + sigma*randn(n)}.
\item More generally, whenever $\Sigma_{\varepsilon} = \sigma^{2}I + \tau^{2}AA^{T}$ which we get when we use the noise model 
\begin{align*}
y = A(x + \varepsilon_{x}) + \varepsilon_{y},\quad \varepsilon_{x}\sim \mathcal{N}(0,\tau^{2}I),\quad \varepsilon_{y}\sim \mathcal{N}(0,\sigma^{2}I).
\end{align*}
\end{itemize}

Even more colloquially we may state our result as
\begin{quote}
  If the noise is not too simple and you don't learn the noise weight, you leave something on the table.
\end{quote}

\section{Numerical experiments}
\label{sec:numerical-experiments}

In this section, we will describe experiments to investigate the different optimal regularizers that have been derived in the previous sections. The code to reproduce the numerical experiments can be found at \url{https://github.com/dirloren/learned-regularization}.

\subsection{Deconvolution of plateau functions under structured noise}

In our first experiment we consider a discrete deconvolution
problem. The training data is generated consisting of 50\,000 versions of vectors $x^{\dag}\in\RR^{n}$ for $n=200$ as follows: We view $x$ as a discretized function $x:[0,1]\to\RR$ and generate these by:
\begin{itemize}
\item Choose an integer $k$ between 2 and 5 uniformly at random
\item Set $x(t) = \sum\limits_{i=1}^{k} (a_{i}^{2}+0.01)\chi_{[c_{i}-b_{i},c_{i}+b_{i}]}(t)$ where the $a_{i}\sim \mathcal{N}(0,1)$  i.i.d., $b_{i}\sim\Unif([0,1])$ i.i.d., and $c_{i}\sim \Unif([0,0.15])$ i.i.d.
\end{itemize}
The mean and covariance of $x^{\dag}$ is approximated by the empirical mean and covariance.

The operator $A\in\RR^{m\times n}$ represents a convolution with a hat
function with width 30 (normalized to sum to one) and zero extension of $x^{\dag}$ so that we obtain $m=259$. To obtain the measurements $y$ we add noise $\varepsilon$ with $\varepsilon\sim \mathcal{N}(0,\diag(\sigma_{i}^{2}))$ where $\sigma_{i}$ decays linearly with $i$ from $\sigma_{1} = 10^{-2}$ to $\sigma_{m} = 5\cdot10^{-4}$ (i.e., there is larger noise for small $i$ and small noise for large $i$).
An example for the data $x^{\dag}$ and the respective data $y = Ax^{\dag} + \varepsilon$ as well as the empirical data mean is shown in Figure~\ref{fig:deconv-data-mean-examples}. The empirical data covariance is shown in Figure~\ref{fig:deconv-data-covariance}.

\begin{figure}[htb]
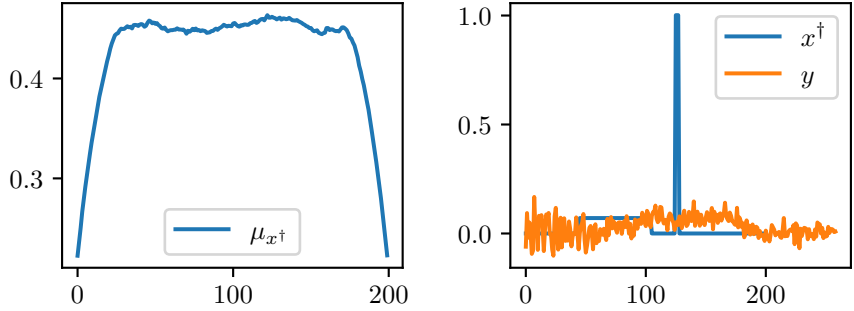
\centering
   \input{figures/deconv-data-mean.pgf}~\input{figures/deconv-data-examples.pgf}
  \caption{Left: The empirical mean of the data of experiment 1. Right: One sample of the data $x^\dag$ and the corresponding $y = Ax^{\dag} + \varepsilon$.}
  \label{fig:deconv-data-mean-examples}
\end{figure}

\begin{figure}[htb]\centering
 \includegraphics{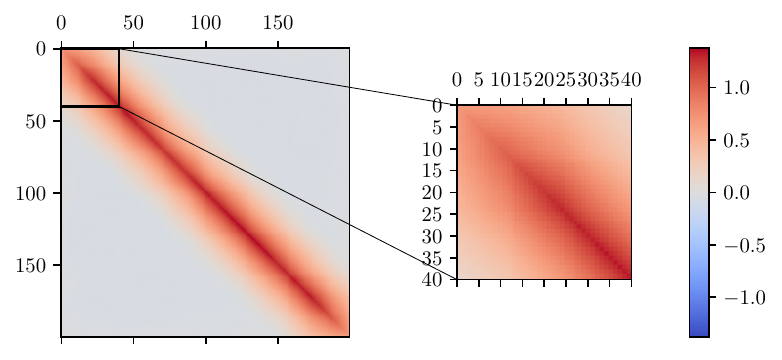}
  \caption{The empirical covariance matrix $\Sigma_{x^{\dag}}$ of the data $x^{\dag}$.}
  \label{fig:deconv-data-covariance}
\end{figure}

We then compute the LMMSE estimator map (which is equal to all the
learned regularization methods that also learn the noise weight) and
the best Lavrentiev and quadratic regularization (without noise
weight) according to the results from
Theorem~\ref{thm:optimal-lavrentiev} and
Theorem~\ref{thm:optimal-quadratic}, respectively. Both methods
compute a square matrix $M\in\RR^{n\times n}$ while the $M$ for best
quadratic regularization is symmetric by construction. Both maps are
shown in Figure~\ref{fig:deconv-opt-M-lav-quad}. The best $M$ for
Lavrentiev regularization is notably not symmetric and the relative
norm of the skew-symmetric part, i.e. $\tfrac{\tfrac12\norm{M -
    M^{T}}_{\textnormal{Fro}}}{\norm{M}_{\textnormal{Fro}}}$, is $0.59$.
The best $M$ for quadratic regularization is symmetric by construction, but not necessarily positve definite. In fact, the smallest eigenvalue is about $-0.19$ (and this does not seem to be a numerical issue as this number is stable with respect to the number of samples $x^{\dag}$ we use).

\begin{figure}[htb]
  \centering
  \includegraphics[width=0.7\textwidth]{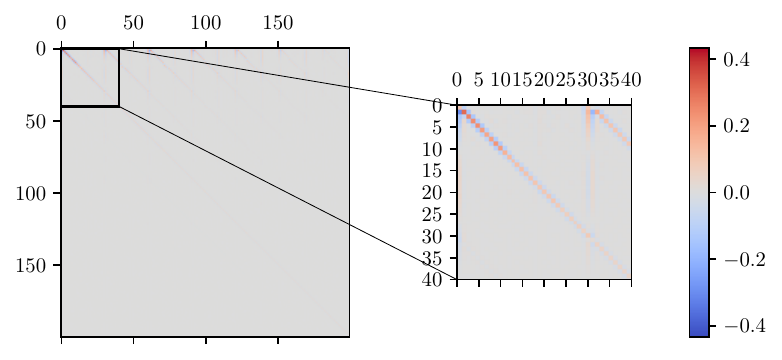}\\
  \includegraphics[width=0.7\textwidth]{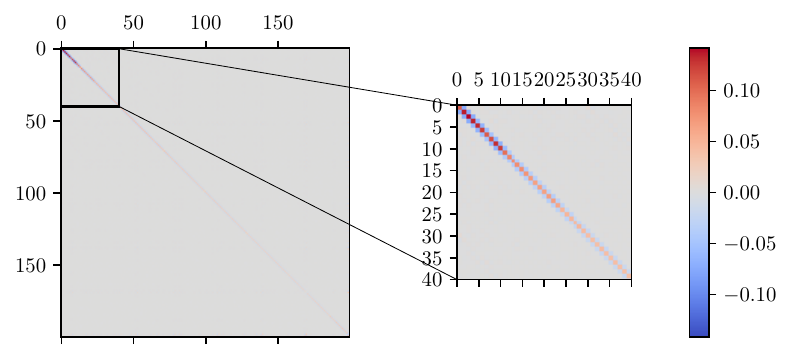}
  \caption{First row: Plot of the optimal matrix $M$ for Lavrentiev regularization. Bottom row: Plot of the optimal matrix $M$ for quadratic regularization. The right column shows a zoom on the top left $40\times 40$ block of the matrices on the left.}
  \label{fig:deconv-opt-M-lav-quad}
\end{figure}

Finally, we create test data consisting of 20\,000 samples from the
same distributions and compute the empirical losses over the test
data. The resulting values of the losses are shown in
Table~\ref{tab:losses}. We do not show the losses
$\mathcal{R}_{\textnormal{Tikh}(\Omega)}$,
$\mathcal{R}_{\textnormal{Lav}(\Omega)}$, and
$\mathcal{R}_{\textnormal{Quad}(\Omega)}$ as they are all equal to
$\mathcal{R}_{\textnormal{Aff}}$. It can be seen that the losses in
Table~\ref{tab:losses} are indeed all different, which empirically proves that the inequalities $\mathcal{R}_{\textnormal{Aff}}\leq \mathcal{R}_{\textnormal{Lav}} \leq \mathcal{R}_{\textnormal{Quad}}$ can be strict in practical examples.

\begin{table}[htb]
  \centering
  \begin{tabular}{lr}\toprule
    $\mathcal{R}_{\textnormal{Aff}}$& 23.12\\
    $\mathcal{R}_{\textnormal{Lav}}$& 23.23\\
    $\mathcal{R}_{\textnormal{Quad}}$ & 23.50\\\bottomrule
  \end{tabular}
  \caption{Empirical test losses for the LLMMSE (best affine), the best Lavrentiev regularization and the best quadratic regularization.}
  \label{tab:losses}
\end{table}

\subsection{Dereverberation of speech signals under simulated wind noise}

In our second experiment we use speech data from the IEEE-Harvard Corpus \cite{loizou2007speech}. This corpus includes 720 sentences spoken by male individuals, sampled at 16 kHz. We use the same split of the 720 overall signals into training (504 signals), validation (108 signals), and test (108 signals) data as was introduced in \cite{brauer2018primal} and used further in \cite{brauer2019learning, brauer2023asymptotic, brauer2024learning}. First, we normalize each full signal by dividing it by its maximum absolute value, such that each normalized signal has entries in the range $[-1, 1]$. Then, we split each normalized signal into non-overlapping frames of length 1\,000 resulting in 21\,147 frames for training and 4\,601 frames for testing (note that we do not make use of the validation data split in this experiment). Finally, we downsample each frame by a factor of two resulting in frames $x^\dag_i$ of length $n=500$.

\medskip

Regarding the forward operator, we consider another discrete deconvolution problem. In this case, $A\in\RR^{(2n - 1) \times n}$ is a reverberation matrix that models a sequence of decaying echoes. The associated convolution kernel $v\in\RR^n$ is defined via $v_1 \coloneqq 1$, $v_{i \cdot 50} \coloneqq 0.8^i$ for $i\in\{1,\dots,10\}$, and $v_j \coloneqq 0$ elsewhere. As in our first experiment, $A$ represents a full convolution using zero extension of the clean signal $x^\dag$ on both sides.

\medskip

Unlike the aforementioned earlier work, we do not use quantization noise \cite{brauer2018primal, brauer2019learning, brauer2023asymptotic} or i.i.d.\ standard normally distributed noise \cite{brauer2024learning} here. Quantization noise depends on the ground truth signal $x^\dagger$, which does not comply with our noise model. Moreover, since in this particular experiment we aim for noise that does not satisfy the condition from Theorem~\ref{thm:lavrentiev_equals_lmmse}, i.i.d.\ standard normal noise is also not considered. Instead, we generate noisy signals $y_i = Ax^\dag_i + \eta w_i \in \RR^{2n - 1}$ where $w_i$ is randomly generated wind noise and $\eta \in \{0.1, 0.2, 0.3, 0.4, 0.5\}$ is a factor controlling the noise level.

\begin{figure}[htb]
    \centering
    
    \begin{subfigure}{0.45\textwidth}
        \centering
        \includegraphics[width=\linewidth]{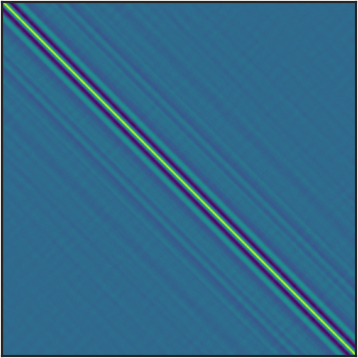}
        \caption{$\hat\Sigma_{x^\dag}$}
        \label{fig:sigma_x}
    \end{subfigure}
    \hfill
    \begin{subfigure}{0.45\textwidth}
        \centering
        \includegraphics[width=\linewidth]{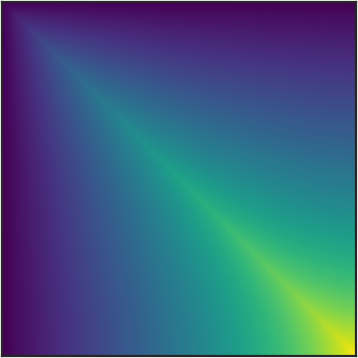}
        \caption{$\hat\Sigma_\varepsilon$}
        \label{fig:sigma_epsilon}
    \end{subfigure}
    
    \medskip
    
    \begin{subfigure}{\textwidth}
        \centering
        \includegraphics[width=\linewidth]{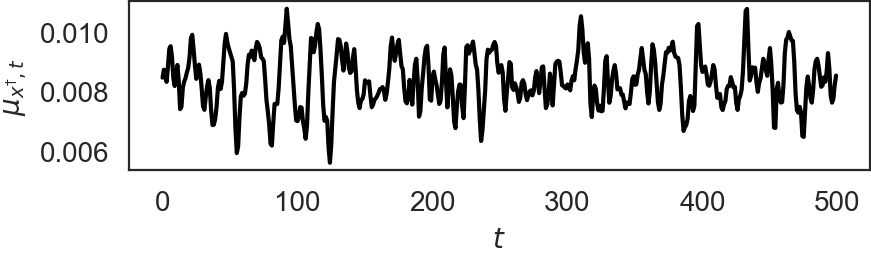}
        \caption{$\mu_x$}
        \label{fig:mu_x}
    \end{subfigure}
    
    \caption{Caption}
    \label{fig:placeholder}
\end{figure}

\medskip

Each instance of the wind noise $w \in \RR^{2n-1}$ is generated as follows: First, we sample an i.i.d.\ Gaussian noise vector $\gamma\in\RR^{2n-1}$, then cumulate $\beta_t \coloneqq \gamma_1 + \dots + \gamma_t$ for $t\in\{1,\dots,2n-1\}$ and set $b \coloneqq \beta / ||\beta||_\infty$ to get normalized Brownian noise $b\in \RR^{2n-1}$. Second, we apply a low-pass filter with a cutoff frequency of 3 kHz to $b$ to obtain $b_{\text{LP}}\in \RR^{2n-1}$. Third, we create a bursty amplitude envelope $e\in \RR^{2n-1}$. And finally, we modulate the filtered signal with the bursty envelope, add low-frequency sinusoidal modulations and small stochastic perturbations, namely
\begin{equation*}
    w_t = b_{\text{LP}, t} \cdot e_t \cdot (1 + \tfrac{1}{10}\sin (2\pi f_t \tfrac{t - 1}{8000} + \phi_t)) + \tfrac{\epsilon_t}{1000} \quad \text{for} \quad t \in \{1, \dots, 2n-1\}
\end{equation*}
with random $f_t \sim \Unif([0.1, 0.5])$, $\phi_t \sim \Unif([0, 2\pi])$ and $\epsilon_t \sim \mathcal{N}(0, 1)$. For a comprehensive overview on wind noise modeling we refer to \cite{nelke2016wind}.

\medskip

In addition to testing the performance of the theoretically optimal reconstruction maps for LMMSE estimation (cf. Corollary~\ref{cor:llmmse}), Lavrentiev regularization (cf. Theorem~\ref{thm:optimal-lavrentiev}) and quadratic regularization (cf. Theorem~\ref{thm:optimal-quadratic}) on speech signals corrupted by wind noise, we also learn respective reconstruction maps from the training data via gradient descent. This also enables us to incorporate Tikhonov regularization, for which we did not derive an optimal reconstruction map. In total, we learn four parameterized reconstruction maps per noise level, which are summarized in Table~\ref{tab:learning_objectives}. The first minimization problem seeks the optimal affine linear mapping.  The other three correspond to the maps without noise weight from Table~\ref{tab:regularizaton-methods}.

\begin{table}[htb]
    \centering
    \begin{tabular}{ll}
    \toprule
    Learned $\mathcal{R}_{\textnormal{Aff}}$  &  $\min_{W, b} \ \ \tfrac{1}{m n}\sum_{i=1}^{m} \norm{Wy_i + b - x^\dag_i}^2$ \\
    Learned $\mathcal{R}_{\textnormal{Lav}}$  &  $\min_{M, x_0} \ \ \tfrac{1}{m n}\sum_{i=1}^{m} \norm{(A^T A + M)^{-1}(A^T y_i + x_0) - x^\dag_i}^2$ \\
    Learned $\mathcal{R}_{\textnormal{Quad}}$ &  $\min_{L, x_0} \ \ \tfrac{1}{m n}\sum_{i=1}^{m} \norm{(A^T A + \tfrac12[L + L^T])^{-1}(A^T y_i + x_0) - x^\dag_i}^2$ \\
    Learned $\mathcal{R}_{\textnormal{Tikh}}$  &  $\min_{R, x_0} \ \ \tfrac{1}{m n}\sum_{i=1}^{m} \norm{(A^T A + R^T R)^{-1}(A^T y_i + x_0) - x^\dag_i}^2$ \\
    \bottomrule
    \end{tabular}
    \caption{Learning objectives for data-based training of reconstruction maps. Here, $m$ denotes the number of training examples and $n$ is the dimension of the ground truth data. In the first row, the variable dimensions are $W\in\RR^{n \times (2n-1)}$ and $b\in\RR^{2n-1}$. Apart from that, $M, L, R\in \RR^{n\times n}$ and $x_0\in\RR^n$ throughout.}
    \label{tab:learning_objectives}
\end{table}

\begin{table}[]
    \centering
    \begin{tabular}{lccccc}
    \toprule
    Noise level $\eta$ &        0.1 &        0.2 &        0.3 &        0.4 &        0.5 \\
    \midrule
    Optimal $\mathcal{R}_{\textnormal{Aff}}$ & 7.05e-05 & 1.98e-04 & 3.80e-04 & 6.03e-04 & 8.56e-04 \\
    Optimal $\mathcal{R}_{\textnormal{Lav}}$ & 7.28e-05 & 2.05e-04 & 3.93e-04 & 6.24e-04 & 8.84e-04 \\
    Optimal $\mathcal{R}_{\textnormal{Quad}}$ & 9.16e-05 & 2.31e-04 & 4.25e-04 & 6.62e-04 & 9.29e-04 \\
    \midrule
    Learned $\mathcal{R}_{\textnormal{Aff}}$ & 7.10e-05 & 1.99e-04 & 3.79e-04 & 6.03e-04 & 8.55e-04 \\
    Learned $\mathcal{R}_{\textnormal{Lav}}$ & 9.38e-05 & 2.54e-04 & 4.67e-04 & 7.11e-04 & 9.80e-04 \\
    Learned $\mathcal{R}_{\textnormal{Quad}}$ & 1.02e-04 & 2.68e-04 & 4.82e-04 & 7.29e-04 & 1.00e-03 \\
    Learned $\mathcal{R}_{\textnormal{Tikh}}$ & 1.06e-04 & 2.76e-04 & 4.86e-04 & 7.32e-04 & 1.01e-03 \\
    \bottomrule
    \end{tabular}
    \caption{Mean squared error of different reconstruction maps for different noise levels on the training data.}
    \label{tab:speech_mse_results_train}
\end{table}

\begin{table}[htb]
    \centering
    \begin{tabular}{lccccc}
    \toprule
    Noise level $\eta$ &        0.1 &        0.2 &        0.3 &        0.4 &        0.5 \\
    \midrule
    Optimal $\mathcal{R}_{\textnormal{Aff}}$  & 7.51e-05 & 2.11e-04 & 4.05e-04 & 6.41e-04 & 9.06e-04 \\
    Optimal $\mathcal{R}_{\textnormal{Lav}}$  & 7.35e-05 & 2.05e-04 & 3.93e-04 & 6.22e-04 & 8.81e-04 \\
    Optimal $\mathcal{R}_{\textnormal{Quad}}$ & 8.92e-05 & 2.25e-04 & 4.17e-04 & 6.48e-04 & 9.08e-04 \\
    \midrule
    Learned $\mathcal{R}_{\textnormal{Aff}}$  & 7.50e-05 & 2.10e-04 & 4.01e-04 & 6.36e-04 & 9.05e-04 \\
    Learned $\mathcal{R}_{\textnormal{Lav}}$  & 9.48e-05 & 2.53e-04 & 4.66e-04 & 7.11e-04 & 9.86e-04 \\
    Learned $\mathcal{R}_{\textnormal{Quad}}$ & 1.01e-04 & 2.62e-04 & 4.72e-04 & 7.15e-04 & 9.88e-04 \\
    Learned $\mathcal{R}_{\textnormal{Tikh}}$  & 1.04e-04 & 2.69e-04 & 4.76e-04 & 7.18e-04 & 9.91e-04 \\
    \bottomrule
    \end{tabular}
    \caption{Mean squared error of different reconstruction maps for different noise levels on the test data. The reported errors in this table correspond exactly to those illustrated in Figure~\ref{tab:speech_mse_results}.}
    \label{tab:speech_mse_results}
\end{table}

\begin{figure}[htb]
    \centering
    \includegraphics{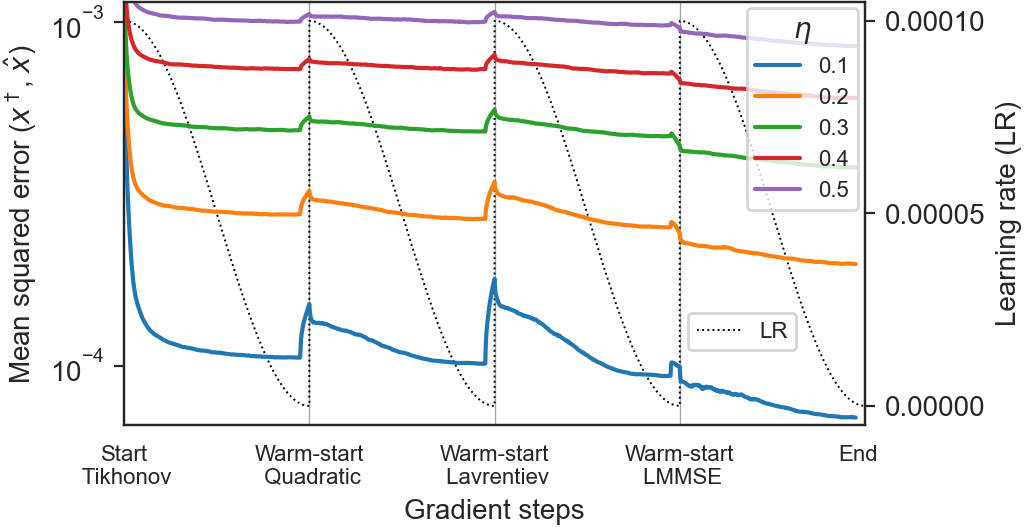}
    \caption{Illustration of the learning progress using a warm-start strategy in the transition from method to method. The different colors indicate different noise levels, as shown on the left vertical axis. While optimization variables are carried over during each warm-start, the learning rate decay is the same for all four methods, as indicated by the dotted line. All error curves are smoothed using a moving average of length 6610, corresponding to 5\% of the length of one epoch.}
    \label{fig:wind_experiment_learning_curves}
\end{figure}

\medskip

In each case, stochastic gradient descent is carried out using automatic differentiation in \textsc{TensorFlow}~\cite{tensorflow2015-whitepaper} and the Adam~\cite{kingma2015adam} optimizer with a cosine decay learning rate schedule with initial learning rate $10^{-4}$, a batch size of 32, and 200 epochs of training. Moreover, we apply the following warm-start strategy: For each noise level, we first learn $\mathcal{R}_{\textnormal{Tikh}}$. Then, we initialize the optimization variable for $\mathcal{R}_{\textnormal{Quad}}$ using the final Tikhonov solution, $L = R^TR$, carrying over the state of $x_0$. Similarly, we then transition to $\mathcal{R}_{\textnormal{Lav}}$ initializing $M = L$ with the optimal quadratic map and again carrying over $x_0$. The optimization variables for $\mathcal{R}_{\textnormal{Aff}}$ are finally initialized using $W = (A^T A + M)^{-1} A^T$ and $b = (A^T A + M)^{-1} M x_0$ using the final Lavrentiev iterates $M$ and $x_0$.

In Table~\ref{tab:speech_mse_results_train} we observe that in general the performance of the learned regularizers on the training data is close to the performance of the optimal ones that were computed with the empirical mean and covariance matrices. Comparing the numbers with the ones on the test data in Table~\ref{tab:speech_mse_results} we see that the generalization error (i.e. the difference between $\mathcal{R}^{\textnormal{test}}$ and $\mathcal{R}^{\textnormal{train}}$) is in general quite small.
However, we also observe that the risk for the theoretically best method $\mathcal{R}_{\textnormal{Aff}}$ is larger than $\mathcal{R}_{\textnormal{Lav}}$ when we use the formulae derived in the paper.
We suspect that the reason is that we used the empirical covariance and mean from the training set which lead to overfitting to this data. A possible explanation for the worse generalization of the best affine map is that it has more degrees of freedom that the best Lavrentiev and quadratic regularization ($mn+n$ vs. $n^{2}+n$ and we have $m>n$) and hence, may be more prone to overfitting the data in the finite training set.
For the learned methods we observe better generalization and also the theoretically predicted order of the methods $\mathcal{R}_{\textnormal{Aff}}\leq \mathcal{R}_{\textnormal{Lav}}\leq \mathcal{R}_{\textnormal{Quad}}\leq \mathcal{R}_{\textnormal{Tikh}}$ is fulfilled on both test and training data.

\section{Conclusion}
\label{sec:conclusion}
Our analysis of the regularization methods shows that there are performance gaps between Tikhonov,
Lavrentiev and quadratic regularization if the weight of their respective
data fidelity terms is not compatible with the covariance of the
noise. This underscores the importance of learning not only the prior
distribution, but also the noise model in data-driven
regularization. If, in turn, the noise model is known, then weighted
Tikhonov regularization is sufficient to recover the optimal affine
reconstruction found by Alberti et al.\
\cite{alberti2021learning}. Our numerical experiments confirm that, if
the noise model is not known or not used, it can be advantageous to
employ regularizers that are not positive semidefinite or even
asymmetric. Future research should investigate our statements if the
assumptions on uncorrelated, additive noise are relaxed, since our
experiments confirmed some of our analytic results even in this
scenario.
It remains open how the optimal Tikhonov regularizers $R$ (which are in general not unique) look like.
Moreover, it would be interesting to understand better which factors influence the differences in the optimal risks for the different methods.

\bibliographystyle{plain}
\bibliography{references.bib}

\end{document}